\documentclass[11pt,a4,fleqn]{article}
\usepackage{graphicx}
\usepackage{amsmath,amssymb,latexsym,graphics,epsfig}
\usepackage{hyperref}
\usepackage{color}
\usepackage{amsthm}

\setlength{\textwidth}{15cm} \setlength{\textheight}{22cm}
\setlength{\topmargin}{0mm} \setlength{\evensidemargin}{7mm}
\setlength{\oddsidemargin}{7mm}

\newtheorem{theorem}{\bf Theorem}[section]

\newtheorem{lemma}[theorem]{\bf Lemma}
\newtheorem{corollary}[theorem]{\bf Corollary}

\newtheorem{remark}[theorem]{\bf Remark}

\newtheorem{claim}{\bf Claim}

\newtheorem{conjecture}{\bf Conjecture}

\numberwithin{equation}{section}

\begin{document}
\title{{\Large Ramsey numbers of uniform loose paths and cycles}}
\author{\small  G.R. Omidi$^{\textrm{a},\textrm{b},1}$, M. Shahsiah$^{\textrm{b}}$\\
\small  $^{\textrm{a}}$Department of Mathematical Sciences,
Isfahan University
of Technology,\\ \small Isfahan, 84156-83111, Iran\\
\small  $^{\textrm{b}}$School of Mathematics, Institute for
Research
in Fundamental Sciences (IPM),\\
\small  P.O.Box: 19395-5746, Tehran,
Iran\\
\small \texttt{E-mails: romidi@cc.iut.ac.ir,
shahsiah@ipm.ir}}
\date {}
\maketitle \footnotetext[1] {\tt This research is partially
carried out in the IPM-Isfahan Branch and in part supported
by a grant from IPM (No. 92050217).} \vspace*{-0.5cm}

\begin{abstract} Recently, determining the Ramsey numbers of loose paths and cycles in uniform hypergraphs
has received considerable attention. It has been shown that  the $2$-color Ramsey number
of a $k$-uniform loose cycle
$\mathcal{C}^k_n$, $R(\mathcal{C}^k_n,\mathcal{C}^k_n)$, is asymptotically $\frac{1}{2}(2k-1)n$. Here we
conjecture that for any $n\geq m\geq 3$ and $k\geq 3,$
$$R(\mathcal{P}^k_n,\mathcal{P}^k_m)=R(\mathcal{P}^k_n,\mathcal{C}^k_m)=R(\mathcal{C}^k_n,\mathcal{C}^k_m)+1=(k-1)n+\lfloor\frac{m+1}{2}\rfloor.$$
Recently the case $k=3$ is proved by the
authors. In this paper, first we show that this
conjecture is true for $k=3$ with a much shorter proof.
Then, we show that for fixed $m\geq 3$ and  $k\geq 4$ the conjecture is equivalent to (only) the last equality for any  $2m\geq n\geq m\geq 3$.
Consequently, the proof for $m=3$ follows.

\noindent{\small { Keywords:} Ramsey number, Uniform hypergraph, Loose path, Loose cycle.}\\
{\small AMS subject classification: 05C65, 05C55, 05D10.}

\end{abstract}

\section{\normalsize Introduction}

For given $k$-uniform hypergraphs $\mathcal{G}$ and $\mathcal{H},$
the \textit{Ramsey number} $R(\mathcal{G},\mathcal{H})$ is
defined to be
the smallest integer $N$ so that in every red-blue coloring of the
edges of the complete $k$-uniform hypergraph $\mathcal{K}^k_N$ there is  a red copy of $\mathcal{G}$ or a blue copy of $\mathcal{H}.$
There are various definitions  for paths and cycles in
hypergraphs. The case we focus on here is called {\it loose}. A {\it
$k$-uniform  loose cycle} $\mathcal{C}_n^k$ (shortly, a {\it cycle
of length $n$})  is a hypergraph with vertex set
$\{v_1,v_2,\ldots,v_{n(k-1)}\}$ and the set of $n$ edges
$e_i=\{v_1,v_2,\ldots, v_k\}+(i-1)(k-1)$, $i=1,2,\ldots, n$. Here,
we use mod $n(k-1)$ arithmetic and adding a number $t$ to a set
$H=\{v_1,v_2,\ldots, v_k\}$ means a shift, i.e. the set obtained
by adding $t$ to subscripts of each element of $H$. Similarly, a
{\it $k$-uniform  loose path} $\mathcal{P}_n^k$ (shortly, a {\it
path of length $n$}) is a hypergraph with vertex set
$\{v_1,v_2,\ldots,v_{n(k-1)+1}\}$  and the set of $n$ edges
$e_i=\{v_1,v_2,\ldots, v_k\}+(i-1)(k-1)$, $i=1,2,\ldots, n$.
For
an edge $e_i=\{v_{(i-1)(k-1)+1},v_{(i-1)(k-1)+2},\ldots, v_{i(k-1)+1}\}$ of a given loose path (also a given loose cycle)
$\mathcal{K}$, the first vertex, $v_{(i-1)(k-1)+1},$ and the last vertex, $v_{i(k-1)+1},$ are denoted by
$f_{\mathcal{K},e_i}$ and
$l_{\mathcal{K},e_i}$, respectively.\\

%

The problem  of determining or estimating Ramsey numbers is one of
the most important problems in combinatorics which has been of
interest to many investigators. In contrast to the graph case, there are only a few results on the Ramsey numbers of hypergraphs. Recently, this topic
has received considerable attention. The investigation of the
Ramsey numbers of hypergraph loose paths and cycles was initiated
by  Haxell et al. (see \cite{HLPRRSS}).
Indeed, they determined the asymptotic value of the Ramsey number
of $3$-uniform loose cycles. This result
was extended by Gy\'{a}rf\'{a}s, S\'{a}rk\"{o}zy and Szemer\'{e}di
\cite[Theorem 2]{GSS} to $k$-uniform
loose cycles as follows.

\begin{theorem}{\rm \cite{GSS}}\label{Gyarfas} For all  $\eta>0$ there exists $n_0=n_0(\eta)$ such that for every $n> n_0,$
every 2-coloring of $\mathcal{K}^{k}_N$ with
$N=(1+\eta)\frac{1}{2}(2k-1)n$ contains a monochromatic copy of
$\mathcal{C}^k_n.$
\end{theorem}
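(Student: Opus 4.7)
My plan is to follow the \L uczak-type regularity plus connecting framework adapted to hypergraphs. First, apply a weak hypergraph regularity lemma to the $2$-colored complete hypergraph $\mathcal{K}^k_N$, partitioning the vertex set into clusters $V_1,\ldots,V_t$ of nearly equal size so that almost every $k$-tuple of clusters is $\varepsilon$-regular with respect to both color classes. On each regular $k$-tuple pick the majority color; this produces a $2$-edge-coloring of an auxiliary complete $k$-uniform hypergraph $\mathcal{R}$ on $[t]$, the reduced hypergraph.

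Next I would prove a purely combinatorial statement in $\mathcal{R}$: in every $2$-edge-coloring of $\mathcal{K}^k_t$, one color class contains a monochromatic \emph{connected loose matching} whose vertex set covers at least $\frac{2(k-1)}{2k-1}t$ of the clusters. This ratio is what pins down the leading constant $\tfrac{1}{2}(2k-1)n$: since a copy of $\mathcal{C}^k_n$ has $(k-1)n$ vertices and each cluster contributes roughly $N/t$ of them, covering a $\frac{2(k-1)}{2k-1}$ fraction of the clusters gives exactly enough room to embed a loose cycle of length $(1-o(1))n$ when $N\approx\frac{(2k-1)n}{2}$.

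Finally, I would lift the monochromatic reduced structure to a loose cycle in the original coloring by a standard embedding and connecting argument. Within each edge of the connected matching, $\varepsilon$-regularity lets me greedily build a long monochromatic loose path whose successive edges reuse the same $k$ clusters and overlap in a single vertex; between consecutive matching edges I glue via a few ``connector'' clusters, which exist because the matching is connected in $\mathcal{R}$; and I keep a small reserve of clusters aside to close up the cycle with exactly $n$ edges, absorbing the minor discrepancies accumulated along the way.

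The main obstacle is the second step: establishing the sharp ratio $\frac{2(k-1)}{2k-1}$ in the reduced hypergraph. The extremal constructions that approach this bound have a recognizable shape, essentially splitting the vertex set into two parts of carefully chosen sizes and coloring edges according to which part they meet, and ruling out every such obstruction in an arbitrary $2$-coloring of $\mathcal{K}^k_t$ is the combinatorial heart of the argument. The regularity and embedding steps are comparatively standard technology once the structural lemma is in place, and the looseness of the bound (the factor $1+\eta$) gives the slack that lets the absorbing step work without further surgery.
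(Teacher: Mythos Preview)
This theorem is not proved in the present paper; it is quoted from \cite{GSS} as a background result and no argument for it appears anywhere in the text. So there is no ``paper's own proof'' to compare your sketch against.

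For what it is worth, your outline---weak hypergraph regularity, finding a monochromatic connected matching in the reduced hypergraph covering a $\frac{2(k-1)}{2k-1}$ fraction of the clusters, then lifting via an embedding/connecting argument---is indeed the strategy of the original proof in \cite{GSS}, and your identification of the connected-matching step as the combinatorial core is accurate. But none of this belongs to the paper under review, which simply cites the result and moves on.
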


Some interesting results were obtained on the exact
values of the Ramsey numbers of loose paths and cycles.
Gy\'{a}rf\'{a}s and Raeisi \cite{GR} determined the values of the
Ramsey numbers of two $k$-uniform loose triangles and two $k$-uniform quadrangles. In \cite{MORS}, the authors proved that for every $n\geq
\lfloor\frac{5m}{4}\rfloor$,
$R(\mathcal{P}^3_n,\mathcal{P}^3_m)=2n+\lfloor\frac{m+1}{2}\rfloor.$
Recently, the Ramsey
numbers of 3-uniform loose paths and loose cycles are completely determined; see \cite{OS}. These results motivate us to pose the following conjecture:

\begin{conjecture}\label{path-cycle}
Let $k\geq 3$ be an integer number. For any $n\geq m \geq 3$,
\begin{equation}\label{eq(2)}
R(\mathcal{P}^k_n,\mathcal{P}^k_m)=R(\mathcal{P}^k_n,\mathcal{C}^k_m)=R(\mathcal{C}^k_n,\mathcal{C}^k_m)+1=(k-1)n+\lfloor\frac{m+1}{2}\rfloor.
\end{equation}
\end{conjecture}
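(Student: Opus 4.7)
The plan is to establish matching lower and upper bounds for each of the three Ramsey numbers in (\ref{eq(2)}). The lower bounds follow from a single two-class construction, while the upper bounds require an inductive argument that exploits the containment $\mathcal{P}^k_{m-1} \subset \mathcal{C}^k_m$ (obtained by deleting any edge of a loose cycle) to pass information among the three quantities via the monotonicity $R(\mathcal{G},\mathcal{H}) \le R(\mathcal{G}',\mathcal{H}')$ whenever $\mathcal{G} \subset \mathcal{G}'$ and $\mathcal{H} \subset \mathcal{H}'$.

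\textbf{Lower bounds.} Partition the vertex set of $\mathcal{K}^k_N$ into $A$ and $B$ with $|A| \in \{(k-1)n-1, (k-1)n\}$ and $|B| = \lfloor(m+1)/2\rfloor - 1$. Color an edge red exactly when it lies entirely inside $A$, and blue otherwise. No red $\mathcal{C}^k_n$ or $\mathcal{P}^k_n$ fits inside $A$ for cardinality reasons. Any blue $\mathcal{C}^k_m$ or $\mathcal{P}^k_m$ has $m$ edges, each of which must contain a vertex of $B$; since in a loose cycle or path each vertex lies in at most two edges, a $B$-cover has size at least $\lceil m/2 \rceil = \lfloor(m+1)/2\rfloor$, exceeding $|B|$. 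Choosing $|A|=(k-1)n-1$ for the cycle--cycle case and $|A|=(k-1)n$ for the other two yields the desired lower bounds.

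\textbf{Upper bounds.} Proceed by induction, with $k$ fixed. The base $k=3$ is handled by a streamlined version of the earlier $3$-uniform argument. For $k \ge 4$, the key reduction is: for fixed $m$, once the cycle--cycle equality $R(\mathcal{C}^k_n,\mathcal{C}^k_m) = (k-1)n + \lfloor(m+1)/2\rfloor - 1$ is established in the narrow range $m \le n \le 2m$, the full conjecture follows. To pass from $n$ to $n+1$ when $n \ge 2m$, take a $2$-colored $\mathcal{K}^k_N$ with $N=(k-1)(n+1)+\lfloor(m+1)/2\rfloor$; apply Theorem \ref{Gyarfas} to find a long monochromatic loose cycle or invoke the inductive hypothesis on a slightly smaller induced subhypergraph, then extend by one further loose edge using the $k-1$ vertices not yet consumed at the appropriate end. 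Combining this with the containment $\mathcal{P}^k_{n-1} \subset \mathcal{C}^k_n$ and the monotonicity above transfers the three equalities into one another.

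\textbf{Main obstacle.} The crux is the close regime $m \le n \le 2m$ for $k \ge 4$, where a straightforward induction on $n$ would force $n$ below $m$ and invalidate the hypothesis, so this band must be proved directly. When $m=3$, the interval $\{3,4,5,6\}$ is small enough that the third equality can be verified by a direct case analysis of the possible intersection patterns between a nearly-spanning monochromatic cycle and the residual $B$-vertices, thus yielding the full conjecture for $m=3$. For larger $m$ the close regime requires a new structural idea, most likely an analysis of how two almost-spanning monochromatic loose cycles on $(k-1)n + \lfloor(m+1)/2\rfloor - 2$ vertices must share their shift vertices; this is the principal remaining difficulty.
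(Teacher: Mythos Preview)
Your proposal is not a proof of the conjecture, and neither is the paper's treatment: the statement remains open in general, and both you and the authors correctly isolate the range $m\le n\le 2m$ with $k\ge 4$ as the outstanding case. At that level the overall architecture matches the paper: the lower-bound construction is the one in Lemma~\ref{lower bound}; the reduction of all three equalities to the cycle--cycle equality is Theorem~\ref{connection}/\ref{equivalent1}; and the verification for $m=3$ is exactly the content of Section~4.

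Where your outline has a genuine gap is the step that reduces large $n$ to the band $m\le n\le 2m$. Invoking Theorem~\ref{Gyarfas} is illegitimate here, since that result is asymptotic (it only applies once $n>n_0(\eta)$) and therefore cannot drive an induction that must work for every $n$. Likewise, ``extend by one further loose edge using the $k-1$ vertices not yet consumed'' is a path-extension move; it does not explain how to pass from a red $\mathcal{C}^k_{n-1}$ to a red $\mathcal{C}^k_{n}$, which requires replacing an edge by two new ones through specific outside vertices and is precisely the delicate point. The paper's mechanism is different and is the substantive content of Section~3: given $n_0>2m$, the induction hypothesis furnishes two \emph{disjoint} red cycles $\mathcal{C}^k_{\lfloor n_0/2\rfloor}$ and $\mathcal{C}^k_{\lceil n_0/2\rceil}$, and Lemma~\ref{CnCmimplies Cn+m} then merges them into a red $\mathcal{C}^k_{n_0}$ or produces a blue $\mathcal{C}^k_{m}$. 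That merging lemma is the key idea you are missing; without it (or a substitute), the reduction to $m\le n\le 2m$ does not go through.
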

In the next section, we provide a proof of Conjecture \ref{path-cycle} when $k=3$ (a much shorter proof to that of \cite{OS}). For this purpose, first we show that Conjecture \ref{path-cycle} is equivalent to the following conjecture (see Theorem \ref{equivalent1}).
\begin{conjecture}\label{cycle}
Let $k\geq 3$ be an integer number. For every $n\geq m \geq 3$,
\begin{eqnarray*}\label{3}
\hspace{3
cm}R(\mathcal{C}^k_n,\mathcal{C}^k_m)=(k-1)n+\lfloor\frac{m-1}{2}\rfloor.
\end{eqnarray*}
\end{conjecture}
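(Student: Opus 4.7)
My plan is to establish matching lower and upper bounds for $R(\mathcal{C}^k_n,\mathcal{C}^k_m)$.

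For the lower bound, I would partition a vertex set of size $(k-1)n+\lfloor(m-1)/2\rfloor-1$ into two parts $X$ and $Y$ with $|X|=(k-1)n-1$ and $|Y|=\lfloor(m-1)/2\rfloor$, coloring an edge red if it lies entirely inside $X$ and blue otherwise. The red hypergraph is complete $k$-uniform on fewer than $n(k-1)$ vertices, so it cannot accommodate the loose cycle $\mathcal{C}^k_n$. On the blue side, each vertex of a loose $\mathcal{C}^k_m$ lies in at most two edges, so meeting each of its $m$ edges by vertices in $Y$ alone would force $|Y|\geq \lceil m/2\rceil=\lfloor(m+1)/2\rfloor$; since $|Y|=\lfloor(m-1)/2\rfloor$ is strictly smaller, no blue $\mathcal{C}^k_m$ exists in this coloring.

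The hard direction is the upper bound, which is the core of the conjecture. I would proceed by induction on $n\geq m\geq 3$, with the base case $n=m$ and the small-$m$ cases (say $m\in\{3,4\}$) handled by Gy\'arf\'as--Raeisi \cite{GR} and related work. Given a 2-coloring of $\mathcal{K}^k_N$ with $N=(k-1)n+\lfloor(m-1)/2\rfloor$, I would take a longest monochromatic loose path, say a red $\mathcal{P}^k_t$. If $t\geq n+1$, a suitable choice of endpoints together with a shared connecting vertex should produce a red $\mathcal{C}^k_n$. If $t\leq n$, maximality forces each vertex outside the path to participate in blue $k$-edges with both endpoints of the path; this concentrates blue structure on the complement, so induction on smaller $n'$, or the asymptotic Theorem \ref{Gyarfas} in the large-$n$ regime, should then yield a blue $\mathcal{C}^k_m$.

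The principal obstacle is the closing operation in the inductive step: passing from a long monochromatic loose path to a loose cycle costs $k-1$ extra vertices, which is exactly where the extremal construction is tight. Ruling out near-extremal colorings (those that mimic the bipartite construction above) will likely require a stability-type argument showing that any coloring avoiding a red $\mathcal{C}^k_n$ must concentrate its blue edges onto a small vertex set, which in turn forces a short blue cycle. For $k=3$ the rigid overlap pattern of loose cycles (consecutive edges share exactly one of only two non-pendant vertices) keeps this analysis tractable and was exploited by the authors; for $k\geq 4$ the additional freedom of $k$-element edges substantially multiplies the sub-cases to consider, which is why the statement appears here as a conjecture rather than a theorem in general.
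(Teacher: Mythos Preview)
This statement is presented in the paper as a \emph{conjecture}, not a theorem; the paper proves it only for $k=3$ (Theorem~\ref{main theorem 3uniform}) and for $m=3$ with arbitrary $k$ (Theorem~\ref{m3thm}). Your proposal is therefore an outline toward an open problem, and you yourself flag the general $k\geq 4$ case as out of reach. That said, there are concrete gaps even in the parts you present as routine.

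The most serious one is the closing step: you assert that if the longest red loose path has length $t\geq n+1$, then ``a suitable choice of endpoints together with a shared connecting vertex should produce a red $\mathcal{C}^k_n$.'' This does not follow. A loose path $\mathcal{P}^k_{n+1}$ contains no loose cycle at all; to obtain a red $\mathcal{C}^k_n$ you need an additional red edge joining two far-apart vertices of the path, and nothing in your maximality hypothesis supplies one. The paper avoids this issue entirely: for $k=3$ it inducts on $m+n$ and uses the induction hypothesis to produce a red $\mathcal{C}^3_{n-1}$ (not a path), then proves directly (Lemma~\ref{cn-1 implies cm k=3}) that a red $\mathcal{C}^3_{n-1}$ together with the absence of a red $\mathcal{C}^3_n$ forces a blue $\mathcal{C}^3_m$. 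The technical engine is the $\varpi_S$-configuration machinery (Lemma~\ref{spacial configuration k=3 2} and Corollary~\ref{there is a Pl k=3}), which manufactures a long blue path threaded through a maximal red path and the leftover vertices, and then closes it to a blue cycle using two or three carefully chosen edges that must be blue lest they extend the red cycle.

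A second gap is your base case. You write that ``the base case $n=m$ and the small-$m$ cases'' are handled by \cite{GR}, but \cite{GR} covers only $n=m\in\{3,4\}$; the diagonal case for general $n=m$ is not previously known and in the paper is itself obtained from the same induction. For $k\geq 4$ the paper takes a completely different route: rather than a path-extension argument, it shows (Lemma~\ref{CnCmimplies Cn+m}) that two disjoint red cycles of lengths $\lfloor n_0/2\rfloor$ and $\lceil n_0/2\rceil$ can be merged into a red $\mathcal{C}^k_{n_0}$ or else yield a blue $\mathcal{C}^k_m$, which reduces the conjecture for fixed $m$ to the finite range $m\leq n\leq 2m$ (Theorem~\ref{Am}). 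Your longest-path strategy and the appeal to Theorem~\ref{Gyarfas} do not connect to either of these mechanisms.
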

\noindent Then we will  prove Conjecture \ref{cycle}  for $k=3.$
In Section $3$, we will demonstrate that for fixed $m\geq 3$ and  $k\geq 4$  Conjecture \ref{path-cycle} is equivalent to (only) the last equality for any  $2m\geq n\geq m\geq 3$. More precisely, we will
show that for fixed $m\geq 3$ and  $k\geq 4,$ Conjecture
\ref{cycle} is true for each $n\geq m$ if and only if it is true   for
 each  $2m\geq n\geq m\geq 3$. So using Theorem \ref{equivalent1} we are done.
Subsequently, in the last section, we conclude that
Conjecture \ref{path-cycle}  holds for $m=3$ and every $n\geq 3.$

The following
lemma \cite[Lemma 1]{GR} shows that the values of the Ramsey numbers in Conjecture \ref{path-cycle} are lower bounds
for the claimed Ramsey numbers.

\begin{lemma}{\rm \cite{GR}}\label{lower bound}
For every $n\geq m \geq 2$ and $k\geq 3,$
$(k-1)n+\Big\lfloor\frac{m+1}{2}\Big\rfloor$   is a lower bound
for  both $R(\mathcal{P}^k_n,\mathcal{P}^k_m)$ and
$R(\mathcal{P}^k_n,\mathcal{C}^k_m).$ Moreover,
$R(\mathcal{C}^k_n,\mathcal{C}^k_m)\geq
(k-1)n+\Big\lfloor\frac{m-1}{2}\Big\rfloor$.
\end{lemma}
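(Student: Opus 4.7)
The plan is to prove all three inequalities by exhibiting a single template of 2-edge-colorings of $\mathcal{K}^k_N$ whose number of vertices is exactly one less than the claimed Ramsey number. For the path-vs-path and path-vs-cycle bounds I would take $N=(k-1)n+\lfloor (m+1)/2\rfloor -1$ and partition the vertex set as $V=A\cup B$ with $|A|=(k-1)n$ and $|B|=\lfloor (m+1)/2\rfloor -1$. Color an edge red if it lies entirely inside $A$, and blue otherwise. For the cycle-vs-cycle bound I would use the same template on $N=(k-1)n+\lfloor(m-1)/2\rfloor -1$ vertices with $|A|=(k-1)n-1$ and $|B|=\lfloor(m-1)/2\rfloor$.

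On the red side, every red edge is contained in $A$, so a red copy of $\mathcal{P}^k_n$ would need its $(k-1)n+1$ vertices in $A$ and a red copy of $\mathcal{C}^k_n$ would need $(k-1)n$ vertices in $A$. In the first construction $|A|=(k-1)n$ is one short of what $\mathcal{P}^k_n$ requires, and in the second $|A|=(k-1)n-1$ is one short of what $\mathcal{C}^k_n$ requires; hence no red copy of the relevant target can appear.

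On the blue side, the key point is a simple double count. Every blue edge meets $B$, and in a loose path or loose cycle on $m$ edges each vertex lies in at most two edges. Therefore any blue copy of $\mathcal{P}^k_m$ or $\mathcal{C}^k_m$ yields at least $m$ incidences between its edges and $B$, giving $m\leq 2|B|$ and hence $|B|\geq\lceil m/2\rceil = \lfloor (m+1)/2\rfloor$. Since the first construction has $|B|=\lfloor (m+1)/2\rfloor -1$ and the second has $|B|=\lfloor (m-1)/2\rfloor < \lceil m/2\rceil$, neither coloring can contain a blue loose path or cycle with $m$ edges. The construction is completely explicit and the verification is an elementary incidence count, so I do not anticipate any substantive obstacle; the only mild subtlety is noting that the maximum-degree bound $\leq 2$ applies uniformly to both loose paths and loose cycles, which is what lets a single template settle all three inequalities at once.
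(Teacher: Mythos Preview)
Your argument is correct: the partition construction with $A$ absorbing the red side and the incidence count on $B$ (using the fact that every vertex of a loose path or cycle has degree at most two) cleanly rules out the blue target, and the cardinality of $A$ is chosen to be one short of what the red target needs. Note that the paper does not supply its own proof of this lemma; it is quoted from \cite{GR}, and your construction is exactly the classical one used there, so your approach coincides with the intended proof.
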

\noindent Therefore, in  this paper in order
to determine the Ramsey numbers, it suffices to verify that the
known lower bounds are also upper bounds.\\



 Throughout the paper, for
a 2-edge colored hypergraph $\mathcal{H}$ we denote by
$\mathcal{H}_{\rm red}$ and $\mathcal{H}_{\rm blue}$ the induced
hypergraphs on red edges and blue edges, respectively. Also, the number of
vertices and edges of $\mathcal{H}$ are
denoted by $|\mathcal{H}|$ and $\|\mathcal{H}\|$.

\section{ $3$-uniform loose paths and cycles}

In this section, we present a proof of Conjecture \ref{path-cycle} when $k=3$ (an alternative proof to that of \cite{OS}). First, we show that this conjecture is equivalent to Conjecture \ref{cycle}. For this purpose, we sketch how the last equality of (\ref{eq(2)}) for given $n\geq m\geq 2$, leads to
determine the values of $R(\mathcal{P}^k_n,\mathcal{C}^k_m)$ and $R(\mathcal{P}^k_n,\mathcal{P}^k_{m-1})$ and also $R(\mathcal{P}^k_n,\mathcal{P}^k_n)$ when $n=m$.

The fact that $(k-1)n+\lfloor\frac{m+1}{2}\rfloor$ is a
lower bound for $R(\mathcal{P}^k_{n},\mathcal{C}^k_{m})$ follows from Lemma \ref{lower bound}. To see that it is the upper bound, assume that
$\mathcal{K}^k_{(k-1)n+\lfloor\frac{m+1}{2}\rfloor}$ is $2$-edge
colored red and blue. Since
$$R(\mathcal{C}^k_{n},\mathcal{C}^k_{m})=(k-1)n+\lfloor\frac{m-1}{2}\rfloor<
(k-1)n+\lfloor\frac{m+1}{2}\rfloor,$$ we have  a red copy of
$\mathcal{C}^k_{n}$ or a blue copy of $\mathcal{C}^k_{m}$. If
there is a blue copy of $\mathcal{C}^k_{m}$, we are done. Otherwise, the
existence of a red copy of $\mathcal{C}^k_{n}$ implies that there
is a red copy of $\mathcal{P}^k_{n}$ by \cite[Lemma 2]{GR}. Now we show that $$R(\mathcal{P}^k_{n},\mathcal{P}^k_{m-1})=(k-1)n+\lfloor\frac{m}{2}\rfloor.$$ To see this, assume that
$\mathcal{K}^k_{(k-1)n+\lfloor\frac{m}{2}\rfloor}$ is $2$-edge
colored red and blue. Again, we have  a red copy of
$\mathcal{C}^k_n$ or a blue copy of $\mathcal{C}^k_{m}.$ If the
first case holds, by \cite[Lemma 2]{GR}, we do not have
any thing to prove. Otherwise, a blue copy of  $\mathcal{C}^k_{m}$
contains a blue copy of $\mathcal{P}^k_{m-1}.$  This observation and Lemma
\ref{lower bound} complete the proof.

Now let $n=m$. By
applying Lemma 2 of \cite{GR} the existence of a
monochromatic $\mathcal{C}^k_{n}$ in a $2$-edge colored
$\mathcal{K}^k_{(k-1)n+\lfloor\frac{n+1}{2}\rfloor}$ implies that
there is a monochromatic $\mathcal{P}^k_{n}$. So using Lemma
\ref{lower bound} we have
$$R(\mathcal{P}^k_{n},\mathcal{P}^k_{n})=(k-1)n+\lfloor\frac{n+1}{2}\rfloor.$$
In fact, we have the
following theorem.

\begin{theorem}\label{connection}
Let $n\geq m\geq 2$ be given integers and $R(\mathcal{C}^k_{n},\mathcal{C}^k_{m})=(k-1)n+\lfloor\frac{m-1}{2}\rfloor.$ Then
$R(\mathcal{P}^k_n,\mathcal{C}^k_m)=(k-1)n+\lfloor\frac{m+1}{2}\rfloor$ and $R(\mathcal{P}^k_n,\mathcal{P}^k_{m-1})=(k-1)n+\lfloor\frac{m}{2}\rfloor$. Moreover, for $n=m$ we have $R(\mathcal{P}^k_n,\mathcal{P}^k_m)=(k-1)n+\lfloor\frac{m+1}{2}\rfloor$.
\end{theorem}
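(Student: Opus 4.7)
The plan is to derive each of the three upper bounds from the single hypothesis $R(\mathcal{C}^k_n,\mathcal{C}^k_m)=(k-1)n+\lfloor (m-1)/2\rfloor$, together with two standard facts: Lemma \ref{lower bound} for the matching lower bounds, and \cite[Lemma 2]{GR}, which extracts a monochromatic $\mathcal{P}^k_n$ from a monochromatic $\mathcal{C}^k_n$ whenever at least one unused vertex is available outside the cycle. Lower bounds being immediate, I would concentrate on upper bounds, in each case choosing $N$ just large enough to force a monochromatic cycle via the hypothesis, and then either trimming it to a shorter path or lengthening it to a path of the same length through \cite[Lemma 2]{GR}.

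For $R(\mathcal{P}^k_n,\mathcal{C}^k_m)\leq(k-1)n+\lfloor(m+1)/2\rfloor$, I would set $N=(k-1)n+\lfloor(m+1)/2\rfloor$; since $\lfloor(m+1)/2\rfloor>\lfloor(m-1)/2\rfloor$ we have $N>R(\mathcal{C}^k_n,\mathcal{C}^k_m)$, so any $2$-coloring of $\mathcal{K}^k_N$ yields a red $\mathcal{C}^k_n$ or a blue $\mathcal{C}^k_m$. The blue case is done; in the red case, at least one vertex lies outside the red $\mathcal{C}^k_n$ (since $N>(k-1)n$), and \cite[Lemma 2]{GR} converts this into a red $\mathcal{P}^k_n$. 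For $R(\mathcal{P}^k_n,\mathcal{P}^k_{m-1})\leq(k-1)n+\lfloor m/2\rfloor$, set $N=(k-1)n+\lfloor m/2\rfloor$; a quick check on parity shows $\lfloor m/2\rfloor\geq\lfloor(m-1)/2\rfloor$, so the hypothesis again guarantees a red $\mathcal{C}^k_n$ or a blue $\mathcal{C}^k_m$. A blue $\mathcal{C}^k_m$ trivially contains a blue $\mathcal{P}^k_{m-1}$ (delete any edge), while a red $\mathcal{C}^k_n$ yields a red $\mathcal{P}^k_n$ by the same application of \cite[Lemma 2]{GR}.

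For the diagonal case $n=m$, the target $N=(k-1)n+\lfloor(n+1)/2\rfloor$ strictly exceeds $R(\mathcal{C}^k_n,\mathcal{C}^k_n)=(k-1)n+\lfloor(n-1)/2\rfloor$, so a monochromatic $\mathcal{C}^k_n$ exists, and the extra vertex (or vertices) outside it again lets \cite[Lemma 2]{GR} deliver a monochromatic $\mathcal{P}^k_n$ of the same color.

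I expect no substantial obstacle: the argument is a routine case analysis driven entirely by the observation that the three target Ramsey numbers sit $1$ or $2$ above $R(\mathcal{C}^k_n,\mathcal{C}^k_m)$, exactly the slack needed to upgrade a cycle to a path or trim it to a slightly shorter path. The only care point is verifying the parity comparison $\lfloor m/2\rfloor\geq\lfloor(m-1)/2\rfloor$ used in the second case, which is immediate by considering $m$ even versus $m$ odd. All three equalities then follow by combining these upper bounds with Lemma \ref{lower bound}.
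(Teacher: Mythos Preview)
Your proposal is correct and follows essentially the same argument as the paper: in each of the three cases you invoke the hypothesis on $R(\mathcal{C}^k_n,\mathcal{C}^k_m)$ to obtain a monochromatic cycle, then use \cite[Lemma~2]{GR} to upgrade a red $\mathcal{C}^k_n$ to a red $\mathcal{P}^k_n$ (or simply drop an edge from a blue $\mathcal{C}^k_m$ to get $\mathcal{P}^k_{m-1}$), combining with Lemma~\ref{lower bound} for the matching lower bounds. The only cosmetic difference is that you make explicit the parity check $\lfloor m/2\rfloor\geq\lfloor(m-1)/2\rfloor$ and the availability of an extra vertex for \cite[Lemma~2]{GR}, which the paper leaves implicit.
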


Using Theorem \ref{connection} we have the following result.
\begin{theorem}\label{equivalent1}
Two Conjectures {\rm\ref{path-cycle}} and {\rm \ref{cycle}} are equivalent.
\end{theorem}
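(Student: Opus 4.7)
The plan is to prove the two directions separately. The direction \emph{Conjecture \ref{path-cycle} $\Rightarrow$ Conjecture \ref{cycle}} is essentially bookkeeping: the third equality in (\ref{eq(2)}) rearranges to $R(\mathcal{C}^k_n,\mathcal{C}^k_m)=(k-1)n+\lfloor\frac{m+1}{2}\rfloor-1$, and a one-line case split on the parity of $m$ confirms $\lfloor\frac{m+1}{2}\rfloor-1=\lfloor\frac{m-1}{2}\rfloor$, which is exactly Conjecture \ref{cycle}.

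The interesting direction is the converse, and here I would lean entirely on Theorem \ref{connection}, which was designed for precisely this purpose. Assuming Conjecture \ref{cycle} holds for all $n\geq m\geq 3$, fix such a pair. The third equality in (\ref{eq(2)}) is just the parity identity above, read the other way. The second equality $R(\mathcal{P}^k_n,\mathcal{C}^k_m)=(k-1)n+\lfloor\frac{m+1}{2}\rfloor$ is an immediate consequence of the first conclusion of Theorem \ref{connection} applied to this pair.

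The first equality $R(\mathcal{P}^k_n,\mathcal{P}^k_m)=(k-1)n+\lfloor\frac{m+1}{2}\rfloor$ needs a small case split. When $n=m$, it follows directly from the last assertion of Theorem \ref{connection}. When $n>m$, so that $n\geq m+1\geq 4$, I would apply Theorem \ref{connection} to the shifted pair $(n,m+1)$: Conjecture \ref{cycle}, applied to this pair, supplies the required input $R(\mathcal{C}^k_n,\mathcal{C}^k_{m+1})=(k-1)n+\lfloor\frac{m}{2}\rfloor$, and the second conclusion of Theorem \ref{connection} (with $m$ replaced by $m+1$) then yields $R(\mathcal{P}^k_n,\mathcal{P}^k_m)=(k-1)n+\lfloor\frac{m+1}{2}\rfloor$.

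Nothing in this argument is truly hard---Theorem \ref{connection} does all the work. The only subtlety to keep in mind is that the path--path equality for $n>m$ must be obtained by shifting the parameter to $(n,m+1)$ and invoking Conjecture \ref{cycle} on this shifted pair; the diagonal case $n=m$ must be separated out because this shift would otherwise violate the hypothesis $n\geq m$ required to invoke Conjecture \ref{cycle}, and that is precisely why Theorem \ref{connection} records the diagonal case as a separate statement.
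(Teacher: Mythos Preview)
Your proposal is correct and follows essentially the same approach as the paper: both directions are reduced to Theorem~\ref{connection}, with the forward direction being the trivial parity identity $\lfloor\frac{m+1}{2}\rfloor-1=\lfloor\frac{m-1}{2}\rfloor$. The paper states the proof in a single line (``Using Theorem~\ref{connection}''), whereas you have spelled out the case split for the path--path equality (diagonal vs.\ shifted pair $(n,m+1)$) that the paper leaves implicit; this is exactly what the structure of Theorem~\ref{connection} is set up to provide.
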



In the rest of this section, we will demonstrate that Conjecture \ref{cycle} is true for $k=3.$ For this purpose, we need some definitions.\\

Let $\mathcal{H}$ be a 2-edge colored complete
3-uniform hypergraph, $\mathcal{P}$ be a loose path in $\mathcal{H}$
and $W$ be a set of vertices with $W\cap
V(\mathcal{P})=\emptyset$. By a {\it $\varpi_S$-configuration}, we
mean a copy of $\mathcal{P}^3_2$ with edges $\{x,a_1,a_2\}$ and
$\{a_2,a_3,y\}$ so that $\{x,y\}\subseteq W$  and
 $S=\{a_{j} : 1\leq j
\leq 3\}\subseteq (e_{i-1}\setminus \{f_{\mathcal{P},e_{i-1}}\})\cup e_{i}
\cup e_{i+1}$ is a set of unordered vertices of
 three  consecutive edges of
$\mathcal{P}$ with  $|S\cap (e_{i-1}\setminus \{f_{\mathcal{P},e_{i-1}}\})|\leq 1.$
  The vertices $x$ and $y$ are called {\it the end vertices} of
this configuration. A $\varpi_{S}$-configuration,
$S\subseteq (e_{i-1}\setminus \{f_{\mathcal{P},e_{i-1}}\})\cup e_{i}
\cup e_{i+1}$, is {\it good} if at least one of the vertices of
$e_{i+1}\setminus e_{i}$ is not in $S$.
We say that a
monochromatic path $\mathcal{P}=e_1e_2\ldots e_n$ is {\it maximal
with respect to} $W\subseteq
V(\mathcal{H})\setminus V(\mathcal{P})$ (in brief,
{\it maximal w.r.t.} $W$) if there is no $W'\subseteq W$ so
that for some $1\leq r\leq n$ and $1\leq i\leq n-r+1,$
\begin{eqnarray*}\mathcal{P}'=e_1e_2\ldots e_{i-1}e'_ie'_{i+1}\ldots e'_{i+r}e_{i+r}\ldots e_n,\end{eqnarray*} is a monochromatic path with $n+1$ edges and
 the following properties:

 \begin{itemize}
\item[(i)] $V(\mathcal{P}')=V(\mathcal{P})\cup W'$,
 \item[(ii)] if $i=1$, then
$f_{\mathcal{P}',e'_i}=f_{\mathcal{P},e_i}$,
\item[(iii)] if
 $i+r-1=n$, then
$l_{\mathcal{P}',e'_{i+r}}=l_{\mathcal{P},e_n}$.
\end{itemize}
 Clearly, if $\mathcal{P}$ is maximal w.r.t. $W$, then it is maximal w.r.t. every
$W'\subseteq W$ and also every loose path $\mathcal{P}'$ which is a sub-hypergraph of $\mathcal{P}$ is again maximal w.r.t. $W$.

\begin{lemma}\label{spacial configuration k=3 2}
Let  $\mathcal{H}=\mathcal{K}^3_{\ell}$ be $2$-edge colored
red and blue and   let $\mathcal{P}=e_1e_2\ldots e_n$ \ $\subseteq
\mathcal{H}_{\rm red}$ be a maximal path w.r.t. $W,$ where  $W\subseteq V(\mathcal{H})\setminus V(\mathcal{P})$ and $|W|\geq 3$.
Set $A_1=\{f_{\mathcal{P},e_{1}}\}$ and
$A_i=e_{i-1}\setminus\{f_{\mathcal{P},e_{i-1}}\}$ for $i>1$.
 Then for every two  consecutive edges $e_i$ and $e_{i+1}$
of $\mathcal{P}$ and for each $u\in A_i$ there is a good
$\varpi_S$-configuration, say $C$, in $\mathcal{H}_{\rm blue}$
with end vertices in $W$ and
\begin{eqnarray*}
S\subseteq ((e_i\setminus
\{f_{\mathcal{P},e_{i}}\})\cup \{u\})  \cup (e_{i+1}\setminus \{v\}),
\end{eqnarray*}
 for some
$v\in A_{i+2}$ such that each vertex of $W,$ with the exception of
at most one vertex, can be considered as an end vertex of $C$.
\end{lemma}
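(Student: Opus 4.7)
Adopt the convention $e_j=\{\alpha_j,\beta_j,\gamma_j\}$ with $\alpha_j=f_{\mathcal{P},e_j}$, $\gamma_j=l_{\mathcal{P},e_j}=\alpha_{j+1}$, and $\beta_j$ the middle vertex. Then $A_i=\{\beta_{i-1},\alpha_i\}$ for $i\ge 2$ (with $A_1=\{\alpha_1\}$), while $A_{i+2}=\{\beta_{i+1},\gamma_{i+1}\}$. My natural candidate for the desired blue $\varpi_S$-configuration $C$ takes $S=\{u,\beta_i,\gamma_i\}$ with $a_1=u$, $a_2=\beta_i$, $a_3=\gamma_i$, so its two blue edges are $\{x,u,\beta_i\}$ and $\{\beta_i,\gamma_i,y\}$ for distinct $x,y\in W$. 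Note $S\subseteq(e_i\setminus\{\alpha_i\})\cup\{u\}$ as required, and since $S\cap(e_{i+1}\setminus e_i)=\emptyset$, $C$ is automatically good for either choice of $v\in A_{i+2}$.

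The heart of the argument is the following maximality fact: for every $x\ne y$ in $W$, at least one of the triples $\{x,u,\beta_i\}$, $\{\beta_i,\gamma_i,y\}$ is blue. Suppose both were red. If $u=\alpha_i$, I would replace $e_i$ in $\mathcal{P}$ by the red sub-path $\{x,\alpha_i,\beta_i\},\{\beta_i,\gamma_i,y\}$ (the $r=1$ extension at position $i$, with $W'=\{x,y\}$); a direct check shows the resulting $\mathcal{P}'$ is a genuine loose path of $n+1$ red edges, with all non-consecutive pairs disjoint and satisfying conditions (i)--(iii), contradicting maximality. If $u=\beta_{i-1}$ (so $i\ge 2$), replace the pair $e_{i-1},e_i$ by $e_{i-1},\{x,\beta_{i-1},\beta_i\},\{\beta_i,\gamma_i,y\}$ (the $r=2$ extension at position $i-1$, reusing $e'_{i-1}=e_{i-1}$); the same direct check yields a red loose path of $n+1$ edges, again contradicting maximality.

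With this fact in hand, set $B_L=\{x\in W:\{x,u,\beta_i\}\text{ blue}\}$ and $B_R=\{y\in W:\{\beta_i,\gamma_i,y\}\text{ blue}\}$. Whenever there exist $x\in B_L,y\in B_R$ with $x\ne y$, $C$ is built directly; moreover the maximality fact forces $|W\setminus(B_L\cup B_R)|\le 1$, since two outside vertices $z_1\ne z_2$ would give simultaneously red $\{z_1,u,\beta_i\},\{\beta_i,\gamma_i,z_2\}$, contradicting the fact. A short case analysis (using $|W|\ge 3$) then shows that every vertex of $W$ except possibly one can serve as an end vertex of $C$. The only remaining case is the degenerate one where $B_L=\emptyset$ or $B_R=\emptyset$ (say $B_R=\emptyset$, so $B_L=W$). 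Here I pivot to an alternate candidate with $a_2=\gamma_i$, for example $S=\{u,\gamma_i,\beta_{i+1}\}$ with edges $\{x,u,\gamma_i\},\{\gamma_i,\beta_{i+1},y\}$ (choosing $v=\gamma_{i+1}\in A_{i+2}$), or $S=\{\beta_i,\gamma_i,\beta_{i+1}\}$ with edges $\{x,\beta_i,\gamma_i\},\{\gamma_i,\beta_{i+1},y\}$. A fresh maximality argument --- using an $r\in\{1,2\}$ extension of $\mathcal{P}$ that incorporates these triples together with an auxiliary third vertex $z\in W$ --- forces at least one valid blue pair, and goodness of the resulting configuration is verified exactly as before.

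The main obstacle will be this degenerate step: for each combination of $u\in A_i$ and $v\in A_{i+2}$, one must pick the right alternate candidate, write down the corresponding red extension of $\mathcal{P}$ that maximality must rule out, and --- most delicately --- verify in each instance that the putative extension is a genuine loose path (so that no non-consecutive edges share a vertex). The end-vertex count in the degenerate case is preserved by the same pigeonhole, but requires careful case-by-case bookkeeping to confirm that at most one vertex of $W$ is ultimately excluded.
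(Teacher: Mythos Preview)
Your ``maximality fact'' and the non-degenerate analysis are correct, and they are essentially the paper's opening move (its first observation is that if $\{u,v_{2i},x\}$ is red for some $x$ then $\{x',v_{2i+1},v_{2i}\}$ must be blue for every $x'\neq x$, which is exactly your fact). The problem is the degenerate case, which you explicitly leave unfinished, and which in fact does not go through as you sketch it.

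Take the case $B_R=\emptyset$, i.e.\ $\{\beta_i,\gamma_i,y\}=\{v_{2i},v_{2i+1},y\}$ is red for every $y\in W$ (and hence $B_L=W$). Your second alternate candidate has first edge $\{x,\beta_i,\gamma_i\}$, but that edge is red for \emph{every} $x$, so this candidate is useless here. For your first alternate candidate $\{x,u,\gamma_i\}\{\gamma_i,\beta_{i+1},y\}$ you would need: if $\{x,u,v_{2i+1}\}$ and $\{v_{2i+1},v_{2i+2},y\}$ are both red then some $r\in\{1,2\}$ extension of $\mathcal P$ exists. But the natural two-edge replacements do not cover $v_{2i}$, and three-edge replacements of $e_ie_{i+1}$ built from these triples together with the known red edges $\{v_{2i},v_{2i+1},z\}$ always require an additional edge (such as $\{z,v_{2i+2},v_{2i+3}\}$ or $\{y,v_{2i},v_{2i+3}\}$) whose colour you do not control. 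The ``auxiliary third vertex'' idea does not close this gap.

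The paper avoids this by a different case structure. It first asks whether $\{u,v_{2i},x\}$ \emph{or} $\{v_{2i+2},v_{2i+3},x\}$ is red for some $x$; if so, two tailor-made maximality arguments produce the blue configuration on $S=\{v_{2i},v_{2i+1},v_{2i+2}\}$ directly. If both families are entirely blue, the paper then tests the four edges $f_j\in\big\{\{u,v_{2i+1},y\},\{v_{2i},v_{2i+1},y\},\{u,v_{2i+2},y\},\{v_{2i},v_{2i+2},y\}\big\}$; any blue one pairs with $\{u,v_{2i},x\}$. Only if all $f_j$ are red does a final maximality argument (using $f_3$ and $f_2$ to build a three-edge replacement) force $\{v_{2i},v_{2i+3},y'\}$ to be blue, yielding $S=\{u,v_{2i},v_{2i+3}\}$ with $v=v_{2i+2}$. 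This three-tier split is what your degenerate case is missing; you should either adopt it or supply an explicit extension in each sub-case rather than deferring it.
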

\begin{proof}{ Let $\mathcal{P}=e_1e_2\ldots e_n$ \ $\subseteq
\mathcal{H}_{\rm red}$ be a maximal path w.r.t.   $W\subseteq V(\mathcal{H})\setminus V(\mathcal{P}),$ where
$$e_i=\{v_{2i-1},v_{2i},v_{2i+1}\}, \hspace{1 cm} i=1,2,\ldots, n.$$
Assume that
 $W=\{x_1,\ldots,x_t\}\subseteq V(\mathcal{H})\setminus V(\mathcal{P}).$ Consider the edges $e_i=\{v_{2i-1},v_{2i},v_{2i+1}\}$ and $e_{i+1}=\{v_{2i+1},v_{2i+2},v_{2i+3}\}.$ If the edge  $\{u,v_{2i},x\}$ (resp.  the edge $\{v_{2i+2},v_{2i+3},x\}$) is
red  for some $x\in W$, then the maximality  of $\mathcal{P}$
w.r.t. $W$ implies that for arbitrary vertices
 $x^{\prime}\neq x^{\prime\prime}\in W\setminus \{x\}$ the edges $\{x^{\prime},v_{2i+1},v_{2i}\}$ and
 $\{v_{2i},v_{2i+2},x^{\prime\prime}\}$ (resp. $\{x^{\prime},v_{2i+1},v_{2i+2}\}$ and $\{v_{2i+2},v_{2i},x^{\prime\prime}\}$)
 are blue and there is a good $\varpi_S$-configuration $C=\{x^{\prime},v_{2i+1},v_{2i}\}\{v_{2i},v_{2i+2},x^{\prime\prime}\}$
 (resp. $C=\{x^{\prime},v_{2i+1},v_{2i+2}\}$
 $\{v_{2i+2},v_{2i},x^{\prime\prime}\}$) with $$S=\{v_{2i},v_{2i+1},v_{2i+2}\}\subseteq ((e_i\setminus
\{v_{2i-1}\})\cup \{u\})  \cup (e_{i+1}\setminus \{v_{2i+3}\}).$$
 So we may assume that for each vertex  $x\in W$ both
  edges $\{u,v_{2i},x\}$ and $\{v_{2i+2},v_{2i+3},x\}$ are blue.
  If there is a vertex $y\in W$ such that at least one of the edges $f_1=\{u,v_{2i+1},y\}$,
  $f_2=\{v_{2i},v_{2i+1},y\}$,
  $f_3=\{u,v_{2i+2},y\}$ or $f_4=\{v_{2i},v_{2i+2},y\}$, say $f$,
  is blue, then there is a good $\varpi_S$-configuration $C=\{u,v_{2i},x\}f$, where $x\neq y$, with
   $$S=\{u,v_{2i}\}\cup (f\setminus\{y\})\subseteq ((e_i\setminus
\{v_{2i-1}\})\cup \{u\})  \cup (e_{i+1}\setminus \{v_{2i+3}\}).$$
   Otherwise, we may assume that for every $y\in
  W$ the edges  $f_1, f_2, f_3$ and $f_4$  are red.
 Therefore,
  maximality of $\mathcal{P}$ w.r.t. $W$ implies that for every $y'\in W$ the
  edge $\{v_{2i},v_{2i+3},y'\}$ is blue (otherwise,
replacing $e_ie_{i+1}$ by $f_3f_2\{v_{2i},v_{2i+3},y'\}$, where
$y\neq y'$, in $\mathcal{P}$ yields a red path $\mathcal{P'}$
greater than $\mathcal{P}$; this is a contradiction). Thus, for
every $a\neq b\in W$,
$C=\{u,a,v_{2i}\}\{v_{2i},v_{2i+3},b\}$ is a good
$\varpi_{S}$-configuration with the desired properties, so that
$$S=\{u,v_{2i},v_{2i+3}\}\subseteq ((e_i\setminus
\{v_{2i-1}\})\cup \{u\})  \cup (e_{i+1}\setminus \{v_{2i+2}\}).$$}\end{proof}

\begin{corollary}\label{there is a Pl k=3}
Let $\mathcal{H}=\mathcal{K}_{\ell}^3$ be $2$-edge colored red and blue
and $\mathcal{P}=e_1e_2\ldots e_n$ with $n\geq 2$ be a maximal red
path w.r.t. $W,$ where $W\subseteq V(\mathcal{H})\setminus
V(\mathcal{P})$ and $|W|\geq 3$. Then for some $r\geq 0$ and
$W'\subseteq W$ there is a blue path $\mathcal{Q}=f_1f_2\ldots
f_q$ between $W'$ and $\overline{\mathcal{P}}=e_1e_2\ldots
e_{n-r}$ so that
$W'=\{f_{\mathcal{Q},f_1}\}\cup\{l_{\mathcal{Q},f_{2i}}|1\leq
i\leq q/2\}$ and $V(\mathcal{Q})\setminus W'\subseteq
\overline{\mathcal{P}}$. Moreover, $\mathcal{Q}$ does not contain
at least one of the vertices of $e_{n-r}\setminus e_{n-r-1}$
 as a vertex,
$\|\mathcal{Q}\|=q=2(|W'|-1)=n-r$ and either $x=|W\setminus
W'|\in\{0,1\}$ or $x\geq 2$ and $0\leq r \leq 1$.
\end{corollary}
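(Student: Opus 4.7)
The plan is to construct $\mathcal{Q}$ greedily by sweeping $\mathcal{P}$ from left to right and repeatedly invoking Lemma~\ref{spacial configuration k=3 2} to append one good $\varpi_S$-configuration at each step. Applied to the pair $(e_1,e_2)$ with $u=f_{\mathcal{P},e_1}$, the lemma supplies the first configuration $C_1$: two blue edges whose three $\mathcal{P}$-vertices lie in $e_1\cup e_2$ and whose end vertices $x_0,x_1\in W$ are chosen avoiding the single excepted vertex. Inductively, given a blue loose path $\mathcal{Q}_i=C_1\cdots C_i$ ending at $x_i\in W$ with $\mathcal{P}$-vertices in $e_1\cup\cdots\cup e_{2i}$, I apply the lemma to the next pair $(e_{2i+1},e_{2i+2})$, choosing $u\in e_{2i}\setminus\{f_{\mathcal{P},e_{2i}}\}$ to be a vertex left unused by $C_i$ (such a $u$ exists by the good property of $C_i$, which guarantees that at least one vertex of $e_{2i}\setminus e_{2i-1}$ is outside the previous $S$). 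The new configuration $C_{i+1}$ is then attached via the shared end vertex $x_i$ and a fresh $x_{i+1}\in W\setminus\{x_0,\ldots,x_i\}$.

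The procedure halts after $q/2$ steps, when one of the following occurs: (a) the remaining unused part of $W$ satisfies $|W\setminus W'|\in\{0,1\}$, or (b) fewer than two edges of $\mathcal{P}$ remain, forcing $r=n-q\in\{0,1\}$. Setting $\overline{\mathcal{P}}=e_1\cdots e_{n-r}$, the arithmetic identities $\|\mathcal{Q}\|=q=2(|W'|-1)=n-r$ follow at once from the construction, since each step contributes two blue edges, consumes two edges of $\mathcal{P}$, and adds exactly one new vertex to $W'$ (with $C_1$ contributing two). The good property of the final configuration, applied to $(e_{n-r-1},e_{n-r})$, is precisely what guarantees that some vertex of $e_{n-r}\setminus e_{n-r-1}$ is missed by $\mathcal{Q}$.

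The principal difficulty is the ``at most one exception'' clause of Lemma~\ref{spacial configuration k=3 2}, which can obstruct the gluing if the excepted vertex for $C_{i+1}$ coincides with the shared vertex $x_i$. This is handled by retaining flexibility in the choice of $x_i$ itself: when building $C_i$ we picked $x_i$ from the then-available part of $W$ subject only to avoiding at most one excepted vertex, so on encountering such a clash at step $i+1$ we simply re-choose $x_i$ (together with the accompanying end vertex of $C_i$) among the remaining candidates. This re-choice is feasible precisely when at least three fresh vertices of $W$ are still available, which matches exactly the threshold triggering termination~(a). Verifying that the resulting $\mathcal{Q}$ is a genuine loose path, i.e.\ that consecutive configurations $C_i$ and $C_{i+1}$ share no $\mathcal{P}$-vertex, then becomes routine and follows directly from our choice of $u$ via the good property.
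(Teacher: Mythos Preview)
Your proof is correct and follows essentially the same greedy construction as the paper: both sweep $\mathcal{P}$ two edges at a time, invoke Lemma~\ref{spacial configuration k=3 2} to obtain a good $\varpi_S$-configuration at each stage, and use the ``good'' vertex of $e_{2i}\setminus e_{2i-1}$ as the pivot $u$ for the next application.

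The one genuine procedural difference is your treatment of the exception clause. The paper keeps \emph{both} current end vertices of the growing blue path in the working set $W_k$; since at most one vertex of $W_k$ can be excluded, at least one of the two live ends is always admissible for gluing, and no backtracking is ever needed. You instead grow the path at a single end $x_i$ and, when $x_i$ happens to be the excluded vertex for $C_{i+1}$, go back and re-select $x_i$ within $C_i$. This works, but note that your feasibility count is slightly off: two (not three) fresh vertices of $W$ before step $i+1$ already suffice for the re-choice, and this is exactly the threshold $|W\setminus W'|\ge 2$ complementary to termination~(a). The paper's two-live-ends device is marginally cleaner because it sidesteps the backtracking altogether, but your variant is equally valid.
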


\begin{proof}{Let $\mathcal{P}=e_1e_2\ldots e_{n}$ be a maximal red path
w.r.t. $W\subseteq V(\mathcal{H})\setminus V(\mathcal{P})$ where
$$e_i=\{v_{2i-1},v_{2i},v_{2i+1}\}, \hspace{1 cm} i=1,2,\ldots,
n.$$

\noindent{\bf Step 1:} Set  $\mathcal{P}_1=\mathcal{P}$, $W_1=W$ and
$\mathcal{P}'_1=e_{1}e_{2}$. Since $\mathcal{P}$ is maximal w.r.t.
$W_1$, using Lemma
\ref{spacial configuration k=3 2} there is a good $\varpi_S$-configuration, say $\mathcal{Q}_1,$  with end vertices $x_1$ and $y_1$ in $W_1$ so that $S\subseteq \mathcal{P}'_1$ and
 $\mathcal{Q}_1$ does not contain at least one of the
vertices of $e_2\setminus e_1,$ say $u_1$. Set $W_2=W$ and $\mathcal{P}_2=e_3e_4\ldots e_n.$ If  $|W_2|\leq 3$ or $\|\mathcal{P}_2\|\leq 1$,
 then $\mathcal{Q}=\mathcal{Q}_1$ is a blue path between $W'=W \cap V(\mathcal{Q}_1)$ and $\overline{\mathcal{P}}=e_1e_2$
 with the desired properties.
Otherwise, go to Step 2.\\\\
 {\bf Step k ($k\geq 2$):} Clearly $|W_k|>3$ and $\|\mathcal{P}_k\|> 1.$
Set $\mathcal{P}'_k=((e_{2k-1}\setminus\{f_{\mathcal{P},e_{2k-1}}\})\cup \{u_{k-1}\})e_{2k}$. Since $\mathcal{P}$
 is maximal w.r.t. $W_k$,
 using Lemma \ref{spacial configuration k=3 2} there is a a good $\varpi_S$-configuration, say $\mathcal{Q}_k,$ with end vertices  in $W_k$ so that $S\subseteq \mathcal{P}'_k$ and
 $\mathcal{Q}_k$ does not contain at least one of the
vertices of $e_{2k}\setminus e_{2k-1},$ say $u_{k}.$ Since each vertex of $W_k,$ with the exception at most one, can be considered as an end vertex of $\mathcal{Q}_k,$ we may assume that $\bigcup_{i=1}^{k}\mathcal{Q}_i$ is a blue path with end
 vertices $x_k$ and $y_k$ in $W_k$. Set $\mathcal{P}_{k+1}=e_{2k+1}e_{2k+2}\ldots e_n$ and
 $W_{k+1}=(W\setminus \bigcup_{i=1}^{k}V(\mathcal{Q}_i))\cup\{x_{k},y_{k}\}.$ If $|W_{k+1}|\leq 3$ or $\|\mathcal{P}_{k+1}\|\leq 1$,
 then $\mathcal{Q}=\bigcup_{i=1}^{k}\mathcal{Q}_i$ is a blue path between $W'=W \cap (\bigcup_{i=1}^{k} V(\mathcal{Q}_i))$ and $\overline{\mathcal{P}}=e_1e_2\ldots e_{2k}$
 with the desired properties. Otherwise, go to Step $k+1.$\\

Let $t\geq 2$ be the minimum integer for which we have  $|W_t|\leq
3$ or $\|\mathcal{P}_t\|\leq 1$. Let $W'=W \cap (\bigcup_{i=1}^{t-1} V(\mathcal{Q}_i))$. Clearly
$|W\setminus W'|=0,1$ or $|W\setminus W'|\geq
 2$ and $0\leq \|\mathcal{P}_t\|\leq 1$. So $\mathcal{Q}=\bigcup_{i=1}^{t-1}\mathcal{Q}_i$ is a blue path between $\overline{\mathcal{P}}=e_1e_2\ldots
 e_{n-r}$ and $W'$ with the desired properties, where $r=\|\mathcal{P}_t\|$. Note that, we have  $|W'|=t$ and $n-r=2(t-1).$}\end{proof}

\begin{lemma}\label{cn-1 implies cm k=3}
Let $n\geq m\geq 3,$ $(n,m)\neq (3,3),(4,3),(4,4)$  and
$\mathcal{H}=\mathcal{K}^3_{2n+\lfloor\frac{m-1}{2}\rfloor}$ be
$2$-edge colored red and blue. Assume that there is no copy of
$\mathcal{C}^3_{n}$ in $\mathcal{H}_{\rm red}$ and
$\mathcal{C}=\mathcal{C}^3_{n-1}$ is a loose cycle in
$\mathcal{H}_{\rm red}$. Then there is a copy of
$\mathcal{C}^3_{m}$ in $\mathcal{H}_{\rm blue}$.
\end{lemma}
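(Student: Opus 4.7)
The plan is to use the red cycle $\mathcal{C}$ as the seed of a maximal red loose path, apply Corollary \ref{there is a Pl k=3} to extract a long blue loose path, and close that path into the desired blue $\mathcal{C}^3_m$. First, starting from a red $\mathcal{P}^3_{n-2}$ obtained by deleting one edge of $\mathcal{C}$, I would iteratively extend (using vertices from $V(\mathcal{H})\setminus V(\mathcal{C})$) until the result is a maximal red loose path $\mathcal{P}$ with respect to $W := V(\mathcal{H}) \setminus V(\mathcal{P})$. Each extension absorbs two outside vertices, and the hypothesis that no red $\mathcal{C}^3_n$ exists, together with the red ``return edge'' inherited from $\mathcal{C}$ (which combined with a sufficiently long rearranged $\mathcal{P}$ could be spliced into a forbidden $\mathcal{C}^3_n$), bounds the number of extensions. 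A short arithmetic check using the excluded pairs $(n,m)\in\{(3,3),(4,3),(4,4)\}$ ensures $|W|\ge 3$, so Corollary \ref{there is a Pl k=3} applies.

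Applying the corollary yields a blue loose path $\mathcal{Q}=f_1 f_2 \ldots f_q$ of even length $q=2(|W'|-1)=\|\mathcal{P}\|-r$, with $r\in\{0,1\}$ in the generic situation, zigzagging between a subset $W'\subseteq W$ and a prefix $\overline{\mathcal{P}}$ of $\mathcal{P}$. Since $\|\mathcal{P}\|\ge n-2\ge m-2$, the path $\mathcal{Q}$ contains, as an initial segment, a blue sub-path of any desired even length at most $q$, while many vertices remain unused both in $V(\mathcal{C})\setminus V(\mathcal{Q})$ and in $W\setminus W'$. I would then truncate $\mathcal{Q}$ to a sub-path $\mathcal{Q}^\ast$ of length $m-1$ (if $m$ is odd) or $m-2$ (if $m$ is even), and close $\mathcal{Q}^\ast$ into a blue $\mathcal{C}^3_m$ by adjoining one blue closing edge $\{x,v,y\}$ through an unused vertex $v$ (odd case), or two blue closing edges through two fresh middle vertices (even case). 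The blueness of the candidate closing edges is forced by maximality: any red alternative, spliced with pieces of $\mathcal{P}$ and $\mathcal{C}$, would either extend $\mathcal{P}$ (contradicting maximality) or produce a red $\mathcal{C}^3_n$ (contradicting the hypothesis).

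The main obstacle is this final closing step in the tight regime where $n$ is only slightly larger than $m$: there, the stock of free vertices outside $\mathcal{Q}^\ast$ is small, and one must argue with a parity-sensitive case analysis that the required closing blue edges can be produced simultaneously without creating a red $\mathcal{C}^3_n$. I expect the argument to lean on the end-vertex flexibility of Lemma \ref{spacial configuration k=3 2} (almost every vertex of $W$ may serve as an endpoint of the basic $\varpi_S$-configurations), on the red return edge of $\mathcal{C}$ as a source of contradictions, and on a careful accounting of the vertices left over once the blue path has been assembled.
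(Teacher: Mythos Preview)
Your plan correctly identifies that $\mathcal{P}:=\mathcal{C}\setminus e_i$ is already maximal with respect to $W=V(\mathcal{H})\setminus V(\mathcal{C})$: any one-step extension $\mathcal{P}'$ keeps the endpoints $v_{2i+1},v_{2i-1}$ and still avoids $v_{2i}$, so $\mathcal{P}'e_i$ would be a red $\mathcal{C}^3_n$. Hence no iterative extension is needed, $|W|=2+\lfloor (m-1)/2\rfloor\geq 3$, and Corollary~\ref{there is a Pl k=3} applies. The genuine gap is the closing step, and precisely the tool you invoke for it---the ``red return edge'' $e_i$---cannot do the job. The edge $e_i$ meets \emph{both} endpoints of $\mathcal{P}$, so $e_i\cup\mathcal{P}$ is just the cycle $\mathcal{C}$ of length $n-1$, not a path of length $n-1$; you therefore cannot argue that a candidate closing edge $h$ (which has two vertices in $W$) is blue on the grounds that ``$h$ together with $e_i$ and $\mathcal{P}$ would form a red $\mathcal{C}^3_n$.'' Maximality alone does not help either: Lemma~\ref{spacial configuration k=3 2} only constrains edges with two vertices in $\mathcal{P}$ and one in $W$, whereas your closing edges have two vertices in $W$.

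The paper supplies exactly the missing lever via a dichotomy you omit. Either (Case~1) some red edge $g=\{z,v_{2i},v_{2i+1}\}$ with $z\in W$ exists; then $g\mathcal{P}$ \emph{is} a red path of length $n-1$ with one end in $W$, and any edge meeting one of $\{z,v_{2i}\}$ and one vertex of $e_{i-1}\setminus e_{i-2}$ must be blue lest $g\mathcal{P}$ close to a red $\mathcal{C}^3_n$---these are precisely the one or two closing edges needed (Corollary~\ref{there is a Pl k=3} is applied with $W_0=W\setminus\{z\}$, reserving $z$ and $v_{2i}$ for this purpose). Or (Case~2) no such $g$ exists for any edge of $\mathcal{C}$, whence every edge $\{v_{2i-1},v_{2i},z\}$ and $\{v_{2i},v_{2i+1},z\}$ is blue and a blue $\mathcal{C}^3_m$ is written down directly. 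Your proposal needs this split (or an equivalent device producing a red path of length $n-1$ with an endpoint in $W$); without it the closing argument does not go through.
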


\begin{proof}{Let
$\mathcal{C}=e_1e_2\ldots e_{n-1}$ be a copy of
$\mathcal{C}_{n-1}^3$ in $\mathcal{H}_{\rm red}$ with  edges
$e_i=\{v_1,v_2,v_3\}+2(i-1)$ (mod $2(n-1)$), $i=1,\ldots, n-1$ and
 $W=V(\mathcal{H})\setminus V(\mathcal{C})$. We consider the
following cases.\\

\noindent \textbf{Case 1. } For some edge $e_i=\{v_{2i-1},v_{2i},v_{2i+1}\},$  $1\leq i\leq n-1,$ there is
 a vertex $z\in W$ such that at least one of the edges
 $\{z,v_{2i},v_{2i+1}\}$ or $\{v_{2i-1},v_{2i},z\}$ is red.\\

\noindent Assume that the edge $g=\{z,v_{2i},v_{2i+1}\}$ is red. Set  $\mathcal{P}=e_{i+1}e_{i+2}\ldots e_{n-1}e_1e_2\ldots
e_{i-2}e_{i-1}$ and $W_0=W\setminus \{z\}$ (If the edge  $\{v_{2i-1},v_{2i},z\}$ is red,  consider $ \mathcal{P}=e_{i-1}e_{i-2}\ldots
e_2e_1e_{n-1}e_{n-2}$ $\ldots e_{i+2}e_{i+1}$ and do the following process to get a blue copy of $\mathcal{C}^3_m$).

First let $m\leq 4$. Therefore, $|W_0|=2.$ Assume that $W_0=\{u,v\}.$ We
show that $\mathcal{H}_{\rm blue}$ contains $\mathcal{C}^3_m$ for
each $m\in \{3,4\}$. Since $n\geq 5$ and there is no red copy of
$\mathcal{C}_n^3,$  the edges $f_1=\{u,v_{2i-2},v_{2i}\}, f_2=\{v_{2i},v,v_{2i-1}\}, f_3=\{v_{2i-1},z,u\}$
are blue (if the edge $f_j$ for $1\leq j \leq 3$ is red, then $f_jge_{i+1}\ldots e_{n-1}e_1\ldots e_{i-1}$ is a red copy of $\mathcal{C}_n^3,$ a contradiction).
 Thereby $f_1f_2f_3$ is
a blue copy of $\mathcal{C}_3^3.$ Moreover, $\mathcal{P}'=e_{i-3}e_{i-2}$
(we use mod $(n-1)$ arithmetic) is maximal w.r.t.
$W=W_0\cup\{z\}$. Using Lemma \ref{spacial configuration k=3 2} there
is a good $\varpi_{S}$-configuration, say $C$, in $\mathcal{H}_{\rm blue}$ with end vertices in $W$ so that $S\subseteq e_{i-3}\cup e_{i-2}$. Without loss of
generality assume that $u$ is an end vertex of $C$ in $W$. Again, since there is no red copy of $\mathcal{C}_n^3,$
$C\{v,z,v_{2i-1}\}\{v_{2i-1},v_{2i},u\}$ is a blue copy of
$\mathcal{C}_4^3$.

Now let $m\geq 5.$ Clearly $|W_0|=\lfloor\frac{m-1}{2}\rfloor+1\geq 3$. Since there is no red
copy of $\mathcal{C}^3_n$, $\mathcal{P}$ is a maximal path w.r.t.
$W_0$. Now, using Corollary \ref{there is a Pl k=3}, there is a
blue path  of length $\ell'$ between
 $\overline{\mathcal{P}}$, the path obtained from $\mathcal{P}$ by deleting the last $r$ edges, and $W'$ for
 some $r\geq 0$ and $W'\subseteq W_0$ with the properties mentioned in Corollary \ref{there is a Pl
 k=3}. Let $\mathcal{Q}$ be such a blue path so that $\ell'$ is
 maximum. Since $\|\mathcal{P}\|=n-2$, we have $\ell'=2(|W'|-1)=n-2-r.$
Let $x$ and $y$ be the end vertices of $\mathcal{Q}$ in $W'$ and
$T=W_0\setminus W'$. If  $|T|\geq 2,$  then
 $r\geq 3$, a contradiction to Corollary \ref{there is a Pl k=3}. Therefore, we have $|T|\leq 1.$
First let $T=\emptyset$. Clearly $\ell'=2\lfloor\frac{m-1}{2}\rfloor$. If $m$ is even, then $\ell'=m-2.$ Assume that $w$ is a vertex of $e_{i-1}\setminus e_{i-2}$ so that
$w\notin V(\mathcal{Q})$ (the existence of $w$ is guaranteed by
Corollary \ref{there is a Pl k=3}). Since there is no red copy of $\mathcal{C}_n^3,$ the edges $g_1=\{w,z,x\}$ and $g_2=\{y,v_{2i},w\}$ are blue (if the edge $g_j$ for $1\leq j \leq 2$ is red, then $g_jge_{i+1}\ldots e_{n-1}e_1\ldots e_{i-1}$ is a red copy of $\mathcal{C}_n^3,$ a contradiction). Thus $g_1\mathcal{Q}g_2$
is a blue copy of $\mathcal{C}^3_m$. When $m$ is odd, we have  $\ell'=m-1.$ In this case, remove the last two edges of $\mathcal{Q}$ to get a blue path  $\mathcal{Q}'$ of
length $m-3$ so that $v_{2i-2}\notin \mathcal{Q}'.$
 Now we may assume that vertices $x$ and $y'\neq y$
of $W'$ are the end vertices of $\mathcal{Q}'$.
Again, since there is no red copy of $\mathcal{C}^3_n,$  $$\mathcal{Q}'\{y',v_{2i-2},v_{2i}\}\{v_{2i},y,v_{2i-1}\}\{v_{2i-1},z,x\}$$
is a copy of $\mathcal{C}^3_m$ in $\mathcal{H}_{\rm blue}$.\\
Now let $T=\{u\}$. Clearly $l'=2\lfloor\frac{m-1}{2}\rfloor-2$. If
$m$ is odd, then $l'=m-3$ and $r\geq 1$. Thereby,
$$\mathcal{Q}\{y,v_{2i-2},v_{2i}\}\{v_{2i},u,v_{2i-1}\}\{v_{2i-1},z,x\}$$ is
a blue copy of $\mathcal{C}^3_m$.
Now suppose that  $m$ is even. So
$l'=m-4$ and $r\geq 2$. Let $w$ be a vertex of $e_{i-3}\setminus
e_{i-4}$ so that $w\notin V(\mathcal{Q})$. Using Lemma \ref{spacial
configuration k=3 2}, there is a good $\varpi_{S}$-configuration, say $C$, in $\mathcal{H}_{\rm blue}$ with end vertices in $\overline{W}=\{x,y,u,z\}$ so that $S\subseteq (e_{i-2}\setminus \{f_{\mathcal{P},e_{i-2}}\}\cup \{w\})\cup e_{i-1}.$  Since $\mathcal{Q}$ is maximum with the
properties  mentioned in Corollary \ref{there is a Pl k=3},  we can assume that
$y$ and $z$ are end vertices of $C$ in $\overline{W}$. Clearly
$\mathcal{Q}C\{z,u,w'\}\{w',v_{2i},x\}$ is a blue copy of
$\mathcal{C}_m^3,$ where $w'\in e_{i-1}\setminus (V(C)\cup\{f_{\mathcal{P},e_{i-1}}\}).$\\

\noindent \textbf{Case 2. } For every edge
$e_i=\{v_{2i-1},v_{2i},v_{2i+1}\}$, $1\leq i\leq n-1$, and every
vertex $z\in W$  the edges $\{v_{2i-1},v_{2i},z\}$ and
$\{v_{2i},v_{2i+1},z\}$ are blue.\\

\noindent Assume that $W=\{x_1,x_2,\ldots,x_{\lfloor\frac{m-1}{2}\rfloor+2}\}.$
For $1\leq i\leq m-1,$ set
\begin{eqnarray*}
f_i= \left\lbrace
\begin{array}{ll}
\{x_{\frac{i+1}{2}},v_{\frac{3i+1}{2}},v_{\frac{3i+3}{2}}\}  &\mbox{if~}  i~\mbox{is~odd},\vspace{.5 cm}\\
\{v_{\frac{3i}{2}},v_{\frac{3i}{2}+1},x_{\frac{i}{2}+1}\}
&\mbox{if~} i~\mbox{is~even}.
\end{array}
\right.\vspace{.2 cm}
\end{eqnarray*}
Let $\mathcal{Q}=f_1f_2\ldots f_{m-1}$. So for $m$ even,
$\mathcal{Q}\{x_1,v_{\frac{3m}{2}},v_{\frac{3m}{2}+1}\}$ and for
$m$ odd, $\mathcal{Q}\{x_{\frac{m+1}{2}},v_1,v_2\}$ is a blue copy of
$\mathcal{C}^3_m$. }\end{proof}

Before we give the main result, we need the following lemmas.

\begin{lemma}\label{R(Pk3,Pk3)} {\rm \cite{GR}}
For every $k\geq 3$,
\begin{itemize}
\item [\rm{(a)}]~
$R(\mathcal{P}^k_3,\mathcal{P}^k_3)=R(\mathcal{C}^k_3,\mathcal{P}^k_3)=R(\mathcal{C}^k_3,\mathcal{C}^k_3)+1=3k-1$,
\item [\rm{(b)}]~
$R(\mathcal{P}^k_4,\mathcal{P}^k_4)=R(\mathcal{C}^k_4,\mathcal{P}^k_4)=R(\mathcal{C}^k_4,\mathcal{C}^k_4)+1=4k-2$.
\end{itemize}
\end{lemma}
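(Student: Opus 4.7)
The plan is to reduce both parts to cycle–cycle upper bounds and then argue those directly. First, Lemma~\ref{lower bound} applied with $n=m=3$ and with $n=m=4$ supplies the lower bounds $R(\mathcal{P}^k_3,\mathcal{P}^k_3)\geq 3k-1$, $R(\mathcal{C}^k_3,\mathcal{P}^k_3)\geq 3k-1$, $R(\mathcal{C}^k_3,\mathcal{C}^k_3)\geq 3k-2$ for (a), and $4k-2$, $4k-2$, $4k-3$ for (b). Second, Theorem~\ref{connection} with $n=m$ shows that the single equality $R(\mathcal{C}^k_n,\mathcal{C}^k_n)=(k-1)n+\lfloor\frac{n-1}{2}\rfloor$ implies both $R(\mathcal{P}^k_n,\mathcal{C}^k_n)=(k-1)n+\lfloor\frac{n+1}{2}\rfloor$ and $R(\mathcal{P}^k_n,\mathcal{P}^k_n)=(k-1)n+\lfloor\frac{n+1}{2}\rfloor$; the symmetric identity $R(\mathcal{C}^k_n,\mathcal{P}^k_n)=R(\mathcal{P}^k_n,\mathcal{C}^k_n)$ covers the first equality in each part as well. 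Thus it suffices to establish the two upper bounds
\begin{equation*}
R(\mathcal{C}^k_3,\mathcal{C}^k_3)\leq 3k-2 \quad\text{and}\quad R(\mathcal{C}^k_4,\mathcal{C}^k_4)\leq 4k-3.
\end{equation*}

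For $R(\mathcal{C}^k_3,\mathcal{C}^k_3)\leq 3k-2$, I would $2$-edge color $\mathcal{K}^k_{3k-2}$ and take a longest monochromatic loose path $\mathcal{P}=e_1e_2\cdots e_r$, which we may assume is red. If $r\geq 2$, say $e_1=\{v_1,\ldots,v_k\}$ and $e_2=\{v_k,\ldots,v_{2k-1}\}$, then $W=V(\mathcal{H})\setminus V(\mathcal{P})$ has exactly $k-1$ vertices, and any $k$-edge of the form $\{v_1,v_{2k-1}\}\cup X$ with $X\subseteq W$, $|X|=k-2$, closes $e_1,e_2$ into a loose triangle. If one such edge is red we are done; otherwise every such $k$-edge is blue, and combined with the blue edges forced at $v_1$ and $v_{2k-1}$ by the maximality of $\mathcal{P}$ (no red extension past either endpoint is possible), this yields more than enough blue structure on $W\cup\{v_1,v_{2k-1}\}$ to exhibit a blue loose triangle. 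The degenerate case $r\leq 1$ is easier: the absence of a red loose path of length $2$ forces the red edges to be essentially pairwise disjoint, leaving abundant blue structure for a blue $\mathcal{C}^k_3$.

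For $R(\mathcal{C}^k_4,\mathcal{C}^k_4)\leq 4k-3$, I would use the same template with a longest monochromatic loose path of length up to $3$, attempting closure into a loose quadrangle via the $k-2$ non-path vertices that remain after the three edges of the path. The main obstacle will be the intermediate case $r\in\{2,3\}$ in which a direct closure into a red $\mathcal{C}^k_4$ may fail; here I expect the argument to proceed in the spirit of Lemma~\ref{spacial configuration k=3 2} and Corollary~\ref{there is a Pl k=3}, assembling good $\varpi_S$-configurations (adapted to general $k$) anchored at the endpoints of $\mathcal{P}$ and chaining them with blue edges drawn from $W$ to close a blue $\mathcal{C}^k_4$. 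Once both cycle-cycle upper bounds are verified, the first paragraph closes the proof of (a) and (b).
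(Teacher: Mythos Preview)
The paper does not prove this lemma at all: it is quoted verbatim from \cite{GR} (Gy\'arf\'as and Raeisi) and used as a black box, so there is no ``paper's own proof'' to compare against. Your reduction via Theorem~\ref{connection} to the two cycle--cycle upper bounds is logically sound and non-circular (Theorem~\ref{connection} only needs Lemma~\ref{lower bound} and \cite[Lemma~2]{GR}), and indeed that reduction is the easy half.

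The substantive gaps are in your sketches of the two upper bounds. For $R(\mathcal{C}^k_3,\mathcal{C}^k_3)\leq 3k-2$ your case split on the length $r$ of a longest monochromatic loose path omits $r=3$: a red $\mathcal{P}^k_3$ in $\mathcal{K}^k_{3k-2}$ uses every vertex, so $W=\emptyset$ and your closing-edge argument is vacuous; you still have to extract a monochromatic $\mathcal{C}^k_3$ from a spanning red $\mathcal{P}^k_3$, which is a genuine (if short) further argument. Even in the $r=2$ case, the sentence ``combined with the blue edges forced at $v_1$ and $v_{2k-1}$ by maximality \ldots\ this yields more than enough blue structure'' is not a proof: you have to name three concrete blue edges forming a loose triangle, and for general $k$ this takes some care with how the $k-1$ vertices of $W$ are allocated. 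For $R(\mathcal{C}^k_4,\mathcal{C}^k_4)\leq 4k-3$ you explicitly say only that you ``expect'' an argument in the spirit of Lemma~\ref{spacial configuration k=3 2} and Corollary~\ref{there is a Pl k=3}; those tools are written specifically for $k=3$ and do not transfer directly, so this is a plan, not a proof. In short: the reduction step is fine, but both cycle--cycle bounds remain to be actually proved, and as written part~(b) has no argument at all.
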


\begin{lemma}\label{R(C3,C4)}
Let $k\geq 3$ be an integer number. Then
$$R(\mathcal{C}^k_4,\mathcal{C}^k_3)= 4k-3.$$
\end{lemma}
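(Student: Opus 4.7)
The lower bound $R(\mathcal{C}^k_4,\mathcal{C}^k_3)\geq 4k-3$ is immediate from Lemma \ref{lower bound} applied with $n=4$ and $m=3$. For the matching upper bound, the plan is to fix a $2$-edge coloring of $\mathcal{H}=\mathcal{K}^k_{4k-3}$ that contains no blue $\mathcal{C}^k_3$ and produce a red $\mathcal{C}^k_4$.

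The starting point is Lemma \ref{R(Pk3,Pk3)}(a), which gives $R(\mathcal{C}^k_3,\mathcal{C}^k_3)=3k-2\leq 4k-3$, so $\mathcal{H}$ contains a monochromatic loose triangle; by assumption it must be red. Fix such a red triangle $\mathcal{C}=e_1e_2e_3$, and set $W=V(\mathcal{H})\setminus V(\mathcal{C})$, so that $|W|=4k-3-3(k-1)=k$.

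The strategy is then to promote $\mathcal{C}$ to a red $\mathcal{C}^k_4$ by ``splitting'' a single edge $e_i$ of $\mathcal{C}$ into two red edges: it suffices to find a red loose $2$-path from $f_{\mathcal{C},e_i}$ to $l_{\mathcal{C},e_i}$ whose interior vertices all lie in the pool $S_i=(e_i\setminus\{f_{\mathcal{C},e_i},l_{\mathcal{C},e_i}\})\cup W$. Since $|S_i|=(k-2)+k=2k-2$ and such a $2$-path requires exactly $2k-3$ interior vertices, we have one vertex of slack per edge, and three cycle edges to try. If any of the three attempts succeeds, we are done.

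The main obstacle, which I expect to require the most care, is to show that if this extension fails for each of $e_1,e_2,e_3$, then the coloring already contains a blue $\mathcal{C}^k_3$, contradicting our assumption. The counts here are genuinely tight, so the argument will need to simultaneously exploit the single vertex of slack at each edge, the symmetry between the three cycle edges, and the two anchor roles $f_{\mathcal{C},e_i}$ and $l_{\mathcal{C},e_i}$. Concretely, I would argue that obstruction at the end-vertex $f_{\mathcal{C},e_i}$ forces blueness of a rich family of edges of the form $\{f_{\mathcal{C},e_i}\}\cup A$ with $A\subseteq S_i$, and similarly at $l_{\mathcal{C},e_i}$; combining two such blue edges coming from suitably chosen cycle edges with a third blue edge supplied by a vertex of $W$ should produce three pairwise intersecting blue edges meeting in three distinct single vertices---that is, a blue $\mathcal{C}^k_3$---yielding the desired contradiction.
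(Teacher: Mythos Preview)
Your proposal has a genuine gap: the entire weight of the argument rests on the final paragraph, which you leave as a heuristic (``I would argue'', ``should produce''). Showing that failure of the edge-splitting at all three $e_i$ forces a blue $\mathcal{C}^k_3$ is not automatic; you would have to pin down exactly which $k$-sets containing $f_{\mathcal{C},e_i}$ or $l_{\mathcal{C},e_i}$ are forced blue and then exhibit three of them whose pairwise intersections are three distinct singletons. With only one vertex of slack per edge, with the interiors $e_i\setminus\{f_{\mathcal{C},e_i},l_{\mathcal{C},e_i}\}$ pairwise disjoint while the pool $W$ is shared among all three attempts, and with each $f_{\mathcal{C},e_i}$ equal to some $l_{\mathcal{C},e_{i-1}}$, this bookkeeping is delicate and you have not carried it out. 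As written, the proposal is a plan, not a proof.

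More to the point, you have overlooked a one-line route that the paper actually takes. Instead of part~(a) of Lemma~\ref{R(Pk3,Pk3)}, use part~(b): $R(\mathcal{C}^k_4,\mathcal{C}^k_4)=4k-3$. In a coloring of $\mathcal{K}^k_{4k-3}$ with no red $\mathcal{C}^k_4$ and no blue $\mathcal{C}^k_3$, this immediately yields a \emph{blue} $\mathcal{C}^k_4=e_1e_2e_3e_4$ on vertices $v_1,\dots,v_{4k-4}$. Now any edge obtained from some $e_i$ by swapping one vertex for a suitable vertex of a non-adjacent $e_j$ would, if blue, close a blue $\mathcal{C}^k_3$ with two consecutive edges of the cycle; hence it is red. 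Four such swaps,
\[
(e_2\cup\{v_{4k-4}\}\setminus\{v_k\}),\ (e_4\cup\{v_k\}\setminus\{v_1\}),\ (e_1\cup\{v_{3k-3}\}\setminus\{v_1\}),\ (e_3\cup\{v_1\}\setminus\{v_{3k-2}\}),
\]
already form a red $\mathcal{C}^k_4$. Starting from a blue $\mathcal{C}^k_4$ rather than a red $\mathcal{C}^k_3$ is the decisive simplification: it occupies all but one vertex of $\mathcal{H}$, so no extension step and no case analysis are needed.
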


\begin{proof}{Suppose indirectly that the edges of $\mathcal{K}^k_{4k-3}$ can be
colored with no red copy of
$\mathcal{C}^k_4$ and no blue copy of $\mathcal{C}^k_3$. By Theorem \ref{R(Pk3,Pk3)}, we must have a blue
copy of $\mathcal{C}^k_4.$ Let $\mathcal{C}=e_1e_2e_3e_4$ be a blue
copy of $\mathcal{C}^k_4$ with edges $e_i=\{v_1,\ldots,v_k\}+(k-1)(i-1)({\rm mod}\  4(k-1)).$ Since there is
no blue copy of $\mathcal{C}^k_3,$ $$(e_2\cup \{v_{4k-4}\}\setminus
\{v_k\})(e_4\cup\{v_k\}\setminus
\{v_1\})(e_1\cup\{v_{3k-3}\}\setminus
\{v_1\})(e_3\cup\{v_1\}\setminus \{v_{3k-2}\})$$ is a red copy
of $\mathcal{C}^k_4.$ This contradiction completes the proof.}\end{proof}
\begin{theorem}\label{main theorem 3uniform}
Conjecture {\rm\ref{cycle}} is true for $k=3$.
\end{theorem}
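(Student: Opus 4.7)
The plan is to establish only the upper bound $R(\mathcal{C}^3_n,\mathcal{C}^3_m)\leq 2n+\lfloor(m-1)/2\rfloor$, since Lemma~\ref{lower bound} already supplies the matching lower bound. I would induct on the pair $(n,m)$ with $n\geq m\geq 3$, ordered lexicographically (first by $n$, then by $m$), using Lemma~\ref{cn-1 implies cm k=3} as the main engine of the inductive step.

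For the base cases, note that the three pairs excluded in Lemma~\ref{cn-1 implies cm k=3}, namely $(3,3)$, $(4,3)$ and $(4,4)$, are exactly those handled directly by Lemma~\ref{R(Pk3,Pk3)} and Lemma~\ref{R(C3,C4)}: together they give $R(\mathcal{C}^3_3,\mathcal{C}^3_3)=7$, $R(\mathcal{C}^3_4,\mathcal{C}^3_3)=9$ and $R(\mathcal{C}^3_4,\mathcal{C}^3_4)=9$, all agreeing with the claimed formula $2n+\lfloor(m-1)/2\rfloor$.

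For the inductive step, fix $(n,m)\neq(3,3),(4,3),(4,4)$ with $n\geq m\geq 3$ and suppose for contradiction that $\mathcal{H}=\mathcal{K}^3_{2n+\lfloor(m-1)/2\rfloor}$ admits a 2-coloring containing neither a red $\mathcal{C}^3_n$ nor a blue $\mathcal{C}^3_m$. I would split into two subcases. If $n>m$, the induction hypothesis applied to $(n-1,m)$ (which still satisfies $n-1\geq m$) yields $R(\mathcal{C}^3_{n-1},\mathcal{C}^3_m)=2(n-1)+\lfloor(m-1)/2\rfloor<|\mathcal{H}|$, so $\mathcal{H}$ must contain a red $\mathcal{C}^3_{n-1}$. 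Since $(n,m)$ lies outside the exception list of Lemma~\ref{cn-1 implies cm k=3}, that lemma then produces a blue $\mathcal{C}^3_m$, contradicting our assumption. If instead $n=m\geq 5$, the induction hypothesis applied to the lexicographically smaller pair $(n,n-1)$ gives $R(\mathcal{C}^3_n,\mathcal{C}^3_{n-1})=2n+\lfloor(n-2)/2\rfloor\leq|\mathcal{H}|$, so $\mathcal{H}$ contains a blue $\mathcal{C}^3_{n-1}$. Applying Lemma~\ref{cn-1 implies cm k=3} with the two colors interchanged (permissible because $(n,n)\notin\{(3,3),(4,4)\}$ for $n\geq 5$) then yields a red $\mathcal{C}^3_n$, again a contradiction.

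The genuine combinatorial difficulty is entirely absorbed into Lemma~\ref{cn-1 implies cm k=3}, whose delicate case analysis of good $\varpi_S$-configurations extends a red $\mathcal{C}^3_{n-1}$ to a blue $\mathcal{C}^3_m$. Once that lemma together with the three small base Ramsey numbers are available, the induction itself is short and mechanical; the only care required is in ordering the pairs so that the diagonal case $n=m$ can draw on the off-diagonal case $(n,n-1)$, and in checking at each step that $(n,m)$ remains outside the lemma's exception list.
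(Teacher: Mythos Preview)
Your proof is correct and follows essentially the same route as the paper: induction with Lemma~\ref{cn-1 implies cm k=3} as the engine, the three small pairs handled by Lemmas~\ref{R(Pk3,Pk3)} and~\ref{R(C3,C4)}, and the off-diagonal step reducing $(n,m)$ to $(n-1,m)$. The only cosmetic difference is in the diagonal case $n=m\geq 5$: the paper inducts on $m+n$ and invokes $R(\mathcal{C}^3_{n-1},\mathcal{C}^3_{n-1})$ (then uses colour symmetry to assume the $\mathcal{C}^3_{n-1}$ found is red), whereas you induct lexicographically and invoke $R(\mathcal{C}^3_{n},\mathcal{C}^3_{n-1})$ to obtain a blue $\mathcal{C}^3_{n-1}$ before swapping colours in Lemma~\ref{cn-1 implies cm k=3}; both variants are valid and equally short.
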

\noindent\textbf{Proof. }
We give a proof by induction on
$m+n$. The cases $n=m\leq 4$ follow from Lemma \ref{R(Pk3,Pk3)}. By Lemma \ref{R(C3,C4)}, we may assume that $n\geq 5$.
Suppose to the  contrary that the edges of $\mathcal{H}=\mathcal{K}^3_{2n+\lfloor\frac{m-1}{2}\rfloor}$ can be
colored red and blue with no red copy of $\mathcal{C}^3_n$ and
no blue copy of $\mathcal{C}^3_m$. For $n=m$ by the induction hypothesis,
$$R(\mathcal{C}^3_{n-1},\mathcal{C}^3_{n-1})=
2(n-1)+\Big\lfloor\frac{n-2}{2}\Big\rfloor<
2n+\Big\lfloor\frac{n-1}{2}\Big\rfloor.$$ So we may assume that
there is a red copy of $\mathcal{C}^3_{n-1}$ in $\mathcal{H}$.
Using Lemma \ref{cn-1 implies cm k=3} we have a blue copy of
$\mathcal{C}^3_{n}$; a contradiction. For $n> m$, we have $n-1\geq m$ and since
$$R(\mathcal{C}^3_{n-1},\mathcal{C}^3_{m})=
2(n-1)+\Big\lfloor\frac{m-1}{2}\Big\rfloor<
2n+\Big\lfloor\frac{m-1}{2}\Big\rfloor,$$ we may assume that
$\mathcal{C}^3_{n-1}\subseteq \mathcal{H}_{\rm red}$.
Using  Lemma \ref{cn-1 implies cm k=3} we have a blue copy of
$\mathcal{C}^3_{m}$; a contradiction.
$\hfill\blacksquare$

Using Theorems \ref{equivalent1} and \ref{main theorem 3uniform}    we have the following.
\begin{theorem}\label{main theorem 3uniform2}
Conjecture {\rm\ref{path-cycle}} is true for $k=3$.
\end{theorem}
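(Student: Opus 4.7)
The plan is to deduce the theorem as an immediate consequence of the two results just established. Theorem \ref{main theorem 3uniform} verifies Conjecture \ref{cycle} at $k=3$, so $R(\mathcal{C}^3_n,\mathcal{C}^3_m) = 2n + \lfloor (m-1)/2 \rfloor$ for every $n \geq m \geq 3$. Since Theorem \ref{equivalent1} asserts the equivalence of Conjectures \ref{path-cycle} and \ref{cycle}, plugging the former result into the latter statement completes the proof in a single step.

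To make the logical reduction completely transparent, one can instead invoke Theorem \ref{connection} pair by pair, as that is the actual substance of the equivalence. For each $n \geq m \geq 3$ the cycle-cycle hypothesis of Theorem \ref{connection} at the pair $(n,m)$ is supplied by Theorem \ref{main theorem 3uniform}, which immediately yields $R(\mathcal{P}^3_n,\mathcal{C}^3_m) = 2n + \lfloor (m+1)/2 \rfloor$, and, in the diagonal case $n=m$, also $R(\mathcal{P}^3_n,\mathcal{P}^3_n) = 2n + \lfloor (n+1)/2 \rfloor$.

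For the remaining case $R(\mathcal{P}^3_n,\mathcal{P}^3_m)$ with $n > m \geq 3$, I would apply Theorem \ref{connection} at the pair $(n, m+1)$: since $n \geq m+1 \geq 4$ and $R(\mathcal{C}^3_n,\mathcal{C}^3_{m+1}) = 2n + \lfloor m/2 \rfloor$ again by Theorem \ref{main theorem 3uniform}, the $R(\mathcal{P}^k_n,\mathcal{P}^k_{m-1})$ clause of Theorem \ref{connection} (read with $m$ replaced by $m+1$) delivers $R(\mathcal{P}^3_n,\mathcal{P}^3_m) = 2n + \lfloor (m+1)/2 \rfloor$. Collecting the three cases gives, for every $n \geq m \geq 3$,
$$R(\mathcal{P}^3_n,\mathcal{P}^3_m) = R(\mathcal{P}^3_n,\mathcal{C}^3_m) = R(\mathcal{C}^3_n,\mathcal{C}^3_m)+1 = 2n + \lfloor (m+1)/2 \rfloor,$$
which is precisely Conjecture \ref{path-cycle} at $k=3$. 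All the genuine work has already been spent in Theorem \ref{main theorem 3uniform}, and the small diagonal values $n=m\in\{3,4\}$ are absorbed through Lemma \ref{R(Pk3,Pk3)} upstream; the only task here is bookkeeping, namely matching each Ramsey number appearing in Conjecture \ref{path-cycle} to the correct clause of Theorem \ref{connection} with the correct shift of indices. There is no substantive obstacle.
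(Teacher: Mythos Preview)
Your proposal is correct and matches the paper's approach exactly: the paper's entire proof is the single sentence ``Using Theorems \ref{equivalent1} and \ref{main theorem 3uniform} we have the following,'' which is precisely your first paragraph. The additional unpacking you provide via Theorem \ref{connection} (including the index shift $(n,m+1)$ to recover $R(\mathcal{P}^3_n,\mathcal{P}^3_m)$ for $n>m$) simply spells out the content of Theorem \ref{equivalent1} and is consistent with the discussion the paper gives just before stating that theorem.
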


\section{An equivalent variation of Conjecture \ref{path-cycle} for $k\geq 4$}

In this section, we show that for fixed $m\geq 3$ and  $k\geq 4$  Conjecture \ref{path-cycle} is equivalent to (only) the last equality for any  $2m\geq n\geq m\geq 3$. Indeed,  we demonstrate that
for  fixed $m\geq 3$ and $k\geq 4,$ Conjecture
\ref{cycle} holds for every  $n\geq m$ if and only if it holds for every $2m\geq n\geq m.$ Then, using Theorem \ref{equivalent1}, the desired result is achieved.
We will use the following lemma later as follows: We will exhibit
two disjoint red copies of $\mathcal{C}^k_{\lfloor n_0/2\rfloor}$ and $\mathcal{C}^k_{\lceil n_0/2\rceil}$ and this will be enough to ensure the existence of a red copy of $\mathcal{C}^k_{n_0}$ or a blue copy of $\mathcal{C}^k_{m}$ for $n_0\geq 2m+1.$

\begin{lemma}\label{CnCmimplies Cn+m}
Let $n,m,t$ and $k$ be positive integer numbers so that $n\geq
m\geq 3,$ $k\geq 4$ and $t\geq (k-1)(n+m).$ Let
$\mathcal{H}=\mathcal{K}_t^k$ be $2$-edge colored red and blue and
$\mathcal{C}_1=\mathcal{C}^k_n$ and
$\mathcal{C}_2=\mathcal{C}^k_m$ be two disjoint cycles in
$\mathcal{H}_{\rm red}$. There is a red copy of
$\mathcal{C}^k_{n+m}$ or  a blue copy of  $\mathcal{C}^k_{\ell}$ for every
$3\leq \ell\leq m$.

\end{lemma}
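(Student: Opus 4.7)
The plan is to argue by contradiction: I will assume $\mathcal{H}$ contains no red $\mathcal{C}^k_{n+m}$ and construct a blue $\mathcal{C}^k_\ell$ for every $\ell$ with $3\leq\ell\leq m$. First I would fix the standard labellings $V(\mathcal{C}_1)=\{v_1,\ldots,v_{n(k-1)}\}$ and $V(\mathcal{C}_2)=\{u_1,\ldots,u_{m(k-1)}\}$ so that $e_i=\{v_{(i-1)(k-1)+1},\ldots,v_{i(k-1)+1}\}$ (indices mod $n(k-1)$) and analogously for $f_j$; for each edge $e_i$ call $x_i=v_{(i-1)(k-1)+1}$ and $x'_i=v_{i(k-1)+1}$ its two connector endpoints and let $I_i=e_i\setminus\{x_i,x'_i\}$ be its $k-2$ internal vertices. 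Write $y_j$, $y'_j$, $J_j$ for the analogous data on $\mathcal{C}_2$.

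The central observation I would isolate is a \emph{bridging principle}: for any pair $(i,j)$ and any partition $I_i\cup J_j=A\sqcup B$ with $|A|=|B|=k-2$, the two edges
\[
g_1=\{x_i\}\cup A\cup\{y'_j\},\qquad g_2=\{x'_i\}\cup B\cup\{y_j\}
\]
have the property that $(\mathcal{C}_1\setminus\{e_i\})\cup(\mathcal{C}_2\setminus\{f_j\})\cup\{g_1,g_2\}$ is a loose cycle of length $n+m$. Since no red $\mathcal{C}^k_{n+m}$ exists, at least one of $g_1,g_2$ must be blue for every valid choice of $(i,j,A)$; this produces a rich supply of blue edges crossing between the two red cycles.

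To build a blue $\mathcal{C}^k_\ell$ for a given $\ell$, I would select $\ell$ index pairs $(i_s,j_s)$ (possible because $n\geq m\geq\ell$) and at each apply the bridging principle with a carefully chosen partition $A_s$, so that the forced-blue bridge at step $s$ carries a prescribed connector vertex. The $\ell$ resulting blue edges are then to be assembled in a zigzag order, with consecutive blue edges sharing exactly one connector vertex (alternately some $x$ and some $y$), thereby realising the loose-cycle structure of $\mathcal{C}^k_\ell$.

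The principal obstacle is the combinatorial bookkeeping: the bridging principle only guarantees that \emph{one of two} candidate edges at each step is blue, so one must show that a globally consistent choice exists which aligns the blue edges into the required cycle. I expect this to hinge on a parity split on $\ell$ and on careful management of which connector vertices are shared between consecutive blue edges. The hypothesis $k\geq 4$ (giving $|I_i|=k-2\geq 2$) supplies the flexibility needed to choose partitions, while $t\geq(k-1)(n+m)$ serves only to guarantee enough vertices for the hypothetical merged red cycle in the first place.
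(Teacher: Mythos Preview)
Your bridging principle is exactly the right engine, and it matches the paper's core idea. The gap is real, however, and it is not a bookkeeping issue that a parity split will dissolve. With your formulation, the two candidates $g_1=\{x_i\}\cup A\cup\{y'_j\}$ and $g_2=\{x'_i\}\cup B\cup\{y_j\}$ at a given step are \emph{disjoint}, and if you march along consecutive index pairs $(s,s)$ the only way two successive blue bridges can share a connector is if their types alternate ($g_1$ then $g_2$, or $g_2$ then $g_1$). Since you have no control over which of the two is blue, two consecutive steps may both return $g_1$'s (or both $g_2$'s), and then your blue edges are pairwise disjoint. The partition $A_s$ cannot repair this, because $A_s\cup B_s\subseteq I_{i_s}\cup J_{j_s}$ is disjoint from everything at step $s+1$.

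The paper resolves this by making the construction \emph{adaptive} rather than by a global parity argument. Having chosen the blue edge $s_{i-1}$ at step $i-1$ (which, by design, contains vertices from both $\mathcal{C}_1$ and $\mathcal{C}_2$), one picks $x_i\in s_{i-1}\cap e_{i-1}$ and $y_i\in s_{i-1}\cap f_{i-1}$ and then builds the two step-$i$ candidates so that one contains $x_i$ and the other contains $y_i$; concretely
\[
g_i=(e_i\setminus\{v_{(i-1)(k-1)+1},v_{i(k-1)},v_{i(k-1)+1}\})\cup\{x_i,u_{i(k-1)},u_{i(k-1)+1}\},
\]
\[
h_i=(f_i\setminus\{u_{(i-1)(k-1)+1},u_{i(k-1)},u_{i(k-1)+1}\})\cup\{y_i,v_{i(k-1)},v_{i(k-1)+1}\}.
\]
Both extend the current blue path, and if both were red they still merge the two cycles into a red $\mathcal{C}^k_{n+m}$, so one is blue. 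This is where $k\ge4$ is genuinely used: three vertices are removed and three are inserted, so $k-3\ge1$ vertices of the original edge must remain to keep the combinatorics consistent. Finally, closing the blue path into a cycle needs two further specially chosen edge pairs and a short case analysis; your proposal does not yet address this closure step. In short: keep your bridging principle, but replace the hoped-for parity argument by an adaptive choice in which \emph{both} candidates at each step are pre-wired to hook onto the previous blue edge.
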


\begin{proof}{Let $\mathcal{C}_1=e_1e_2\ldots e_{n}$ and $\mathcal{C}_2=f_1f_2\ldots f_{m}$ be copies  of $\mathcal{C}_{n}^k$ and $\mathcal{C}_{m}^k$ in $\mathcal{H}_{\rm
red}$ with edges $$e_i=\{v_1,v_2,\ldots,v_k\}+(i-1)(k-1) ({\rm
mod}\ n(k-1))$$  and
$$f_i=\{u_1,u_2,\ldots,u_k\}+(i-1)(k-1) ({\rm mod}\  m(k-1)).$$
 Without loss of generality assume
that there is no copy of $\mathcal{C}^k_{n+m}$ in
$\mathcal{H}_{\rm red}$.
%
%
%
Let $$g_1=(e_1\setminus \{v_{k-1},v_k\})\cup \{u_{k-1},u_k\}$$
and
$$h_1=(f_1\setminus \{u_{k-1},u_k\})\cup \{v_{k-1},v_k\}.$$ At least
one of the edges $g_1$ or $h_1$, say $s_1$, is blue (otherwise,
$h_1e_2e_3\ldots e_{n}g_1f_2f_3\ldots f_{m}$ is a red copy of
$\mathcal{C}^k_{n+m}$, a contradiction to our assumption). Assume
that $x_2\in s_1\cap e_1$ and $y_2\in s_1\cap
f_1$ are vertices with maximum indices. Now set
$\mathcal{P}_1=s_1$, $$g_2=(e_2\setminus
\{v_k,v_{2k-2},v_{2k-1}\})\cup \{x_2,u_{2k-2},u_{2k-1}\}$$ and
$$h_2=(f_2\setminus \{u_k,u_{2k-2},u_{2k-1}\})\cup
\{y_2,v_{2k-2},v_{2k-1}\}.$$
Similarly, at least one of the edges $g_2$ or $h_2$, say $s_2$, is blue (otherwise, $h_2e_3e_4\ldots e_{n} e_1 g_2 f_3 f_4$ $ \ldots f_{m} f_1$ is a red copy of
$\mathcal{C}^k_{n+m}$, a contradiction).
Set $\mathcal{P}_2=\mathcal{P}_1s_2$ and continue this process
$\ell-2$ times. Note that, with this approach, in the $i$-th step,
$2\leq i\leq \ell-2$, we have $$g_i=(e_i\setminus \{v_{(i-1)(k-1)+1},
v_{i(k-1)}, v_{i(k-1)+1}\})\cup $$$$\{x_i,
u_{i(k-1)}, u_{i(k-1)+1}\}$$ and $$h_i= (f_i\setminus
\{u_{(i-1)(k-1)+1}, u_{i(k-1)}, u_{i(k-1)+1}\}) \cup
$$$$\{y_i, v_{i(k-1)}, v_{i(k-1)+1}\},$$ where $x_i\in
s_{i-1}\cap e_{i-1}$ and $y_i\in
s_{i-1} \cap f_{i-1}$ are vertices
with maximum indices. Since there is no red copy of $\mathcal{C}^k_{n+m}$,  at least one of the edges $g_i$ or $h_i$, say
$s_i$, is blue. So
 clearly $\mathcal{P}_i=\mathcal{P}_{i-1}s_i$ is a blue path of length
 $i$. Now let $$g_{\ell-1}=\big(e_{n}\setminus
 \{v_{(n-1)(k-1)+1},v_{(n-1)(k-1)+2},v_{1}\}\big)\cup $$ $$
 \{x_{1},u_{(\ell-1)(k-1)},u_{(\ell-1)(k-1)+1}\}$$ and  $$h_{\ell-1}=\big(f_{\ell-1}\setminus
\{u_{(\ell-2)(k-1)+1}, u_{(\ell-1)(k-1)},
u_{(\ell-1)(k-1)+1}\}\big)\cup $$ $$ \{y_{\ell-1}, v_{(n-1)(k-1)+1},
v_{(n-1)(k-1)+2}\},$$ so that  $x_{1}\in s_1 \cap e_1$ is a vertex with minimum index and  $y_{\ell-1}\in
s_{\ell-2}\cap f_{\ell-2}$ is a vertex
with maximum index. Again, at least one of the edges $g_{\ell-1}$ or $h_{\ell-1}$ is blue (otherwise, $g_{\ell-1}e_1e_2\ldots e_{n-1} h_{\ell-1}f_{\ell-2}\ldots f_mf_1\ldots f_{l}$ is a red copy of
$\mathcal{C}^k_{n+m}$, a contradiction).
If the edge  $g_{\ell-1}$ is blue, then set
$$g_{\ell}=(e_{\ell-1}\setminus \{v_{(\ell-2)(k-1)+1}, v_{(\ell-1)(k-1)},
v_{(\ell-1)(k-1)+1}\})\cup$$$$ \{x_{\ell-1}, u_{(\ell-2)(k-1)+k-2},
u_{(\ell-1)(k-1)+1}\}$$ and
$$h_{\ell}=(f_{\ell-1}\setminus \{u_{(\ell-2)(k-1)+1}, u_{(\ell-2)(k-1)+k-2}, u_{(\ell-1)(k-1)+1}\})\cup$$$$ \{y_{\ell-1}, v_{(\ell-1)(k-1)}, v_{(\ell-1)(k-1)+1}\},$$ where $x_{\ell-1}\in s_{\ell-2}\cap e_{\ell-2}$ and $y_{\ell-1}\in s_{\ell-2}\cap f_{\ell-2}$ are vertices with maximum indices. Clearly,  at least one of the edges $g_{\ell}$ or $h_{\ell}$ is blue (otherwise, $h_{\ell}e_{\ell}e_{\ell+1}\ldots e_ne_1\ldots e_{\ell-2}g_{\ell}f_{\ell}f_{\ell+1}\ldots f_nf_1$ $\ldots f_{\ell-2}$ is a red copy of $\mathcal{C}^k_{n+m}$). If the edge $g_{\ell}$ is blue, then $\mathcal{P}_{\ell-2}g_{\ell}g_{\ell-1}$ is
a blue copy of $\mathcal{C}_{\ell}^k$. Otherwise, $\mathcal{P}_{\ell-2}h_{\ell}g_{\ell-1}$   is a blue copy of $\mathcal{C}_{\ell}^k$. Now we may assume that the edge $g_{\ell-1}$ is red and so the edge $h_{\ell-1}$ is blue. In this case
set $$g'_{\ell}=(e_n\setminus
\{v_{(n-1)(k-1)+2}, v_{1}\})\cup \{u_{m(k-1)},y_1\}$$ and
$$h'_{\ell}=(f_m\setminus \{u_{m(k-1)}, u_1\})\cup
\{v_{(n-1)(k-1)+2},x_1\},$$ where $x_1\in s_1\cap e_1$ and $y_1 \in s_1\cap f_1$ are vertices
with minimum indices. Since at least one of the edges  $g'_{\ell}$ and $h'_{\ell}$, say
$s_{\ell}$, is blue, then $\mathcal{P}_{\ell-2}h_{\ell-1}s_{\ell}$ is a blue copy of
$\mathcal{C}_{\ell}^k$, which completes the proof. }\end{proof}

\begin{theorem}\label{Am}
Let $m\geq 3$ be a fixed integer, $k\geq 4$ and $A_m=\{n\geq m :
R(\mathcal{C}_{n}^k,\mathcal{C}_{m}^k)=(k-1)n+\lfloor\frac{m-1}{2}\rfloor$\}.
 If $[m,2m]\subseteq A_m,$ then $A_m=[m,\infty]$.
\end{theorem}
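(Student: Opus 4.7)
\medskip

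\noindent\textbf{Proof proposal.} The plan is to prove $A_m=[m,\infty]$ by strong induction on $n$, with base cases $m\le n\le 2m$ provided by hypothesis. Fix $n\ge 2m+1$ and assume every $n'$ with $m\le n'<n$ lies in $A_m$; I will show $n\in A_m$. The lower bound $R(\mathcal{C}_n^k,\mathcal{C}_m^k)\ge (k-1)n+\lfloor(m-1)/2\rfloor$ is given by Lemma \ref{lower bound}, so it suffices to establish the matching upper bound. Suppose for contradiction that $\mathcal{H}=\mathcal{K}^k_N$ with $N=(k-1)n+\lfloor(m-1)/2\rfloor$ is $2$-edge colored with neither a red $\mathcal{C}_n^k$ nor a blue $\mathcal{C}_m^k$.

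Split $n$ as $n=n_1+n_2$ with $n_1=\lfloor n/2\rfloor$ and $n_2=\lceil n/2\rceil$. Since $n\ge 2m+1$, both $n_1$ and $n_2$ lie strictly between $m$ and $n$, so by the induction hypothesis,
\begin{equation*}
R(\mathcal{C}_{n_i}^k,\mathcal{C}_m^k)=(k-1)n_i+\lfloor(m-1)/2\rfloor \quad (i=1,2).
\end{equation*}
Applying this for $i=1$ inside $\mathcal{H}$ (whose vertex count is at least $R(\mathcal{C}_{n_1}^k,\mathcal{C}_m^k)$) and using the assumption that there is no blue $\mathcal{C}_m^k$, one obtains a red copy $\mathcal{C}^{(1)}$ of $\mathcal{C}_{n_1}^k$ on $(k-1)n_1$ vertices. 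The complement $\mathcal{H}\setminus V(\mathcal{C}^{(1)})$ still has
\begin{equation*}
N-(k-1)n_1=(k-1)n_2+\lfloor(m-1)/2\rfloor
\end{equation*}
vertices, which meets $R(\mathcal{C}_{n_2}^k,\mathcal{C}_m^k)$, so a second red $\mathcal{C}^{(2)}\cong\mathcal{C}_{n_2}^k$ appears, disjoint from $\mathcal{C}^{(1)}$.

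Now apply Lemma \ref{CnCmimplies Cn+m} with parameters $(n_2,n_1)$ in the role of $(n,m)$ and $t=N$; the hypothesis $t\ge (k-1)(n_1+n_2)=(k-1)n$ holds, $n_2\ge n_1\ge m\ge 3$, and $k\ge 4$. Since $3\le m\le n_1$, the lemma produces either a red $\mathcal{C}_{n_1+n_2}^k=\mathcal{C}_n^k$ or a blue $\mathcal{C}_m^k$; both contradict the initial assumption. This closes the induction. The essential ingredients are therefore the inductive step powered by Lemma \ref{CnCmimplies Cn+m} together with the base interval $[m,2m]\subseteq A_m$, and the only mildly delicate point is the arithmetic check that the halves $n_1,n_2$ of $n$ both remain $\ge m$ and $<n$ once $n\ge 2m+1$, so that the induction hypothesis genuinely applies to each of them.
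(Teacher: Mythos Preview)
Your proof is correct and follows essentially the same approach as the paper: take a minimal counterexample (equivalently, argue by strong induction), split $n$ into halves $\lfloor n/2\rfloor$ and $\lceil n/2\rceil$, use the induction hypothesis to find two disjoint red cycles of those lengths, and then invoke Lemma~\ref{CnCmimplies Cn+m} to merge them. One tiny wording slip: you write that $n_1,n_2$ lie \emph{strictly} between $m$ and $n$, but when $n=2m+1$ one has $n_1=m$; this is harmless since your induction hypothesis covers $n'=m$ as well.
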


\begin{proof}{ Suppose to the contrary that $A_m\neq[m,\infty]$. Let $n_0$ be the minimum element of
$[m,\infty]-A_m$. Note that $n_0\geq 2m+1$. Let
$\mathcal{H}=\mathcal{K}_{(k-1)n_0+\lfloor\frac{m-1}{2}\rfloor}^k$
be $2$-edge colored red and blue with no  red copy of
$\mathcal{C}_{n_0}^k$ and no  blue copy of $\mathcal{C}_{m}^k$. We
consider the following cases:\\

\noindent\textbf{Case 1. } $n_0$ is even.\\
Clearly $\frac{n_0}{2}> m$. Since $n_0$ is the minimum integer
in $[m,\infty]-A_m,$ then
$$R(\mathcal{C}_{\frac{n_0}{2}}^k,\mathcal{C}_{m}^k)=(k-1)\frac{n_0}{2}+\lfloor\frac{m-1}{2}\rfloor<(k-1)n_0+\lfloor\frac{m-1}{2}\rfloor.$$
Therefore, there is a copy of $\mathcal{C}_{\frac{n_0}{2}}^k$ in
$\mathcal{H}_{\rm red}$, say $\mathcal{C}_1$.  Remove $\mathcal{C}_1$ from $\mathcal{H}$ to get a hypergraph $\mathcal{H}'.$
 Since the reminder hypergraph $\mathcal{H}'$ has equal to $R(\mathcal{C}_{\frac{n_0}{2}}^k,\mathcal{C}_{m}^k)$ vertices,
 there is red copy of
$\mathcal{C}_{\frac{n_0}{2}}^k$, say
$\mathcal{C}_2$,  disjoint from $\mathcal{C}_1$. Using Lemma
\ref{CnCmimplies Cn+m} we have  a red copy of $\mathcal{C}_{n_0}^k$ or a blue  copy of $\mathcal{C}_{m}^k$,  a contradiction to our assumption.\\

\noindent\textbf{Case 2. } $n_0$ is odd.\\
By an argument similar  to case 1, we may assume that there are
disjoint copies of $\mathcal{C}^k_{\frac{n_0-1}{2}}$ and
$\mathcal{C}^k_{\frac{n_0+1}{2}}$ in $\mathcal{H}_{\rm red}$. So
using Lemma \ref{CnCmimplies Cn+m} we have   a red copy of $\mathcal{C}_{n_0}^k$ or a blue copy of
$\mathcal{C}_{m}^k$. This contradiction completes the proof.
}\end{proof}

\begin{remark}\label{equi}
Note that Theorem {\rm\ref{Am}} shows that for  fixed $m\geq 3$ and $k\geq 4,$  Conjecture {\rm\ref{cycle}} holds for each  $n\geq m$  if and only if it holds for each  $ 2m\geq n \geq m.$ This show, using Theorem {\rm\ref{equivalent1}}, that for fixed $m\geq 3$ and  $k\geq 4,$  Conjecture {\rm\ref{path-cycle}} is equivalent to (only) the last equality for any  $2m\geq n\geq m\geq 3$.
We will use this result in the next section,  to demonstrate  that Conjecture {\rm\ref{path-cycle}}
 is true for $m=3.$ \end{remark}

%
%
%
%

\section{Cycle-triangle Ramsey number in uniform hypergraphs}

In this section, we show that Conjecture \ref{path-cycle}
holds for any $n\geq m=3.$ By Remark \ref{equi}, it only suffices to prove that Conjecture \ref{cycle} is true for $m=3$ and $3\leq n\leq 6.$
First, we establish some
essential lemmas.


\begin{lemma}\label{C4k implies C5k}
Let $\mathcal{H}=\mathcal{K}^k_{i(k-1)+1}$, $i=5,6$, be $2$-edge
colored red and blue with no blue copy of $\mathcal{C}^k_3.$ If
there is a blue copy of $\mathcal{C}^k_4,$ then there is a copy of
$\mathcal{C}^k_i$ in $\mathcal{H}_{\rm red}$.
\end{lemma}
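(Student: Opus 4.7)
The plan is to use the absence of a blue $\mathcal{C}^k_3$ to identify a rich family of forced red edges, and then assemble a red $\mathcal{C}^k_i$ explicitly from these together with the extra vertices outside the given blue $\mathcal{C}^k_4$. Write the blue $\mathcal{C}^k_4$ as $e_1e_2e_3e_4$ with $e_j=\{v_1,\ldots,v_k\}+(j-1)(k-1)$ modulo $4(k-1)$, let $w_j=e_j\cap e_{j+1}$, $X_j=e_j\setminus\{w_{j-1},w_j\}$ (of size $k-2$), and $W=V(\mathcal{H})\setminus V(\mathcal{C}^k_4)$ (of size $k$ when $i=5$ and $2k-1$ when $i=6$). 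The key observation is: any edge $f$ with $|f\cap e_j|=|f\cap e_{j+1}|=1$ whose two singletons are distinct and different from $w_j$ must be red, since otherwise $\{f,e_j,e_{j+1}\}$ would be a blue $\mathcal{C}^k_3$.

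For $i=5$, writing $X_j=\{a_j^1,\ldots,a_j^{k-2}\}$ and $W=\{u_1,\ldots,u_k\}$, I would take
\begin{align*}
g_1&=\{a_1^1,u_1,\ldots,u_{k-2},a_2^1\},\\
g_2&=\{a_2^1,a_1^2,\ldots,a_1^{k-2},u_{k-1},a_3^1\},\\
g_3&=\{a_3^1,a_2^2,\ldots,a_2^{k-2},u_k,a_4^1\},\\
g_4&=\{a_4^1,a_3^2,\ldots,a_3^{k-2},w_2,w_4\},\\
g_5&=\{a_1^1,w_4,w_1,w_3,a_4^2,\ldots,a_4^{k-3}\}.
\end{align*}
Each $g_j$ is forced red by the key observation applied to some pair $(e_i,e_{i+1})$: $g_1,g_2,g_3$ via the ``obvious'' consecutive pair; $g_4$ via $(e_1,e_2)$ with singletons $w_4,w_2$; $g_5$ via $(e_2,e_3)$ with singletons $w_1,w_3$. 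Consecutive $g_j$'s share exactly one vertex ($a_{j+1}^1$ for $j=1,2,3$, then $w_4$ between $g_4,g_5$, then $a_1^1$ between $g_5,g_1$), and non-consecutive pairs are disjoint by the careful partition of the sets $X_j$ and $W$ across the edges, so $g_1g_2g_3g_4g_5$ is a red $\mathcal{C}^k_5$.

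For $i=6$ an analogous construction works with the six edges
\begin{align*}
g_1&=\{a_1^1,u_1,\ldots,u_{k-2},a_2^1\},\\
g_2&=\{a_2^1,u_{k-1},\ldots,u_{2k-4},a_3^1\},\\
g_3&=\{a_3^1,a_2^2,\ldots,a_2^{k-2},u_{2k-3},a_4^1\},\\
g_4&=\{a_4^1,a_3^2,\ldots,a_3^{k-2},u_{2k-2},a_1^2\},\\
g_5&=\{u_{2k-2},w_1,a_4^2,\ldots,a_4^{k-2},w_3\},\\
g_6&=\{w_1,a_1^1,a_1^3,\ldots,a_1^{k-2},w_2,w_4\},
\end{align*}
where $g_5$ is forced red by $(e_2,e_3)$ and $g_6$ by $(e_3,e_4)$, with the consecutive intersections being $a_2^1,a_3^1,a_4^1,u_{2k-2},w_1,a_1^1$ in order. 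The main obstacle in both constructions is the bookkeeping: verifying that each $g_j$ satisfies the key-observation conditions, and that all non-consecutive pairs of $g_j$'s are vertex-disjoint, is a routine but somewhat tedious case analysis using the disjointness of the chunks drawn from $X_1,X_2,X_3,X_4$ and from $W$. The index ranges used above are valid for $k\ge 4$; the case $k=3$ is already settled by the results of Section~2.
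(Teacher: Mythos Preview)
Your proof is correct and rests on the same key observation as the paper's: any edge meeting two consecutive blue edges $e_j,e_{j+1}$ each in a single vertex distinct from their junction $w_j$ is forced red, else a blue $\mathcal{C}^k_3$ arises. The difference is purely in the explicit construction of the red cycle. The paper obtains each red edge by taking one of the blue edges $e_j$ and swapping out one or two of its vertices (e.g.\ $e_2\cup\{v_{4k-4}\}\setminus\{v_k\}$), so the red $\mathcal{C}^k_i$ visually tracks the blue $\mathcal{C}^k_4$; you instead build your red edges from scratch out of the partition pieces $X_j=e_j\setminus\{w_{j-1},w_j\}$, the junction vertices $w_j$, and $W$. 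Your version makes the bookkeeping (which singleton hits which $e_j$, and why non-consecutive $g_j$'s are disjoint) more transparent at the cost of longer formulas; the paper's is terser but requires the reader to unpack each modified edge. Either way the argument is the same one-line forcing idea followed by an explicit assembly, valid for $k\ge 4$.
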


\begin{proof}{
Suppose that the edges of $\mathcal{H}$ are 2-colored with no blue
copy of $\mathcal{C}^k_3$.  Let $\mathcal{C}=e_1e_2e_3e_4$ be
 a blue copy of $\mathcal{C}^k_4$ with edges $$e_i=\{v_1,\ldots,v_k\}+(i-1)(k-1)({\rm mod}\  4(k-1)),\hspace{0.5 cm} 1\leq i \leq 4,$$  and $W=V(\mathcal{H})\setminus V(\mathcal{C})$.
For $i=5$, since there is no blue copy of $\mathcal{C}^k_3,$
$$(e_2\cup \{v_{4k-4}\}\setminus
\{v_k\})(e_4\cup\{v_{k}\}\setminus
\{v_1\})(e_1\cup\{v_{3k-3}\}\setminus \{v_1\})$$$$
(\{v_{3k-3},v_1\}\cup W\setminus \{w_1,w_2\}) (e_3\cup
\{v_1,w_1\}\setminus \{v_{3k-3},v_{3k-2}\}),$$ where $w_1,w_2\in
W,$ is a red copy of $\mathcal{C}^k_5.$ For $i=6$, partition the
vertices of $W$ into three sets  $A$, $B$ and $C$  with $|A|=|B|=k-2$ and $|C|=3$.
Let $C=\{u_1,u_2,u_3\}$.
 Since there is
no blue copy of $\mathcal{C}^k_3,$ $$(e_2\cup
\{u_1,v_{4k-4}\}\setminus
\{v_k,v_{k+1}\})(e_4\cup\{v_{k+1}\}\setminus \{v_1\}) (e_3\cup
\{v_{k},u_2\}\setminus \{v_{2k-1},v_{2k}\})$$$$ (A\cup
\{v_k,v_{2k}\}) (e_1\cup\{v_{2k}\}\setminus
\{v_k\})(B\cup\{v_1,v_{2k-1}\})$$ is a red copy of
$\mathcal{C}^k_6.$
 }\end{proof}

Also, we need the following  remark of \cite[Remark 3]{GR}. For the sake of completeness, we also represent it's proof here.
\begin{remark}\label{r1} If $\mathcal{K}^k_N$ with $N\geq k+1$ is $2$-edge colored
and both colors are used at least once, then there are two edges
of distinct colors intersecting in $k-1$ vertices.
\end{remark}
\begin{proof}{
Select a red edge $e$ and a blue edge $f$ with maximum
intersection and suppose that $m=|e\cap f|<k-1.$ Let $g$ be an
edge that contains $e\cap f$ and intersects $e\setminus f$ in
$\lfloor\frac{k-m}{2}\rfloor$ vertices and intersects $f\setminus
e$ in $\lceil\frac{k-m}{2}\rceil$ vertices. Now, either $g,e$ or
$g,f$ are two edges of distinct colors intersecting in more than
$m$ vertices, this contradicts the choice of $e$ and $f$.
}\end{proof}

The following lemma is a modified version of Lemma $7$ in
\cite{GR}. But, for the sake of completeness, we state a proof
here.


\begin{lemma}\label{two disjoint edges}
Let $k\geq 3$ and $t\geq 5$. If $\mathcal{H}=\mathcal{K}^k_{t(k-1)+1}$ is $2$-edge
colored red and blue with no red copy of $\mathcal{C}^k_t$ and no
blue copy of $\mathcal{C}^k_3$, then there exist two intersecting
pairs of red-blue edges $e_1,e_2$ and $f_1,f_2$ so that $(e_1\cup
e_2)\cap(f_1\cup f_2)=\emptyset.$
\end{lemma}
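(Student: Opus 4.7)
The strategy is to apply Remark \ref{r1} twice, peeling off one intersecting red--blue pair at a time. First, both colors must appear in $\mathcal{H}$: if all edges were red, $|\mathcal{H}|=t(k-1)+1$ forces a red $\mathcal{C}^k_t$, and if all blue, $|\mathcal{H}|\ge 3(k-1)$ forces a blue $\mathcal{C}^k_3$---each contradicting the hypothesis. So Remark \ref{r1} yields a red edge $e_1$ and a blue edge $e_2$ with $|e_1\cap e_2|=k-1$. Write $e_1=\{a_1,\ldots,a_{k-1},b\}$, $e_2=\{a_1,\ldots,a_{k-1},c\}$, and set $V'=V(\mathcal{H})\setminus(e_1\cup e_2)$. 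Then $|V'|=(t-1)(k-1)-1$, and since $t\ge 5$, $k\ge 3$, one has $|V'|\ge\max\{k+1,\,3(k-1)\}$.

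Next, consider the induced $2$-coloring of $\mathcal{K}^k[V']$. If both colors appear, a second application of Remark \ref{r1} produces a red $f_1$ and a blue $f_2$ in $V'$ with $|f_1\cap f_2|=k-1$; since $V'$ is disjoint from $e_1\cup e_2$, the pairs $(e_1,e_2)$ and $(f_1,f_2)$ satisfy the conclusion. If all edges of $V'$ are blue, the bound $|V'|\ge 3(k-1)$ gives a blue $\mathcal{C}^k_3$, a contradiction.

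The remaining case is that every edge inside $V'$ is red. Since $|V'\cup e_1|=t(k-1)$ exactly equals the vertex count of $\mathcal{C}^k_t$, the plan is to construct a red $\mathcal{C}^k_t$ of the form $e_1\,h_2\,h_3\cdots h_t$ with $h_2\cap e_1$ and $h_t\cap e_1$ each a single vertex of $e_1$, with $h_3,\ldots,h_{t-1}$ lying inside the red clique $V'$ (automatically red), and with $h_2,h_t$ red ``bridge'' edges of the form $\{a_i,v_1,\ldots,v_{k-1}\}$ or $\{b,v_1,\ldots,v_{k-1}\}$ with $v_\ell\in V'$. Either disjoint red bridges exist, immediately yielding the forbidden red $\mathcal{C}^k_t$, or else for some index $i$ every edge $\{a_i,v_1,\ldots,v_{k-1}\}$ with $v_\ell\in V'$ is blue. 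In the latter case, combining one such blue edge with $e_2$ and a suitable blue edge $\{c,v_1,w_1,\ldots,w_{k-2}\}$ (with $w_\ell\in V'\setminus\{v_1,\ldots,v_{k-1}\}$) yields three blue edges whose pairwise single-vertex intersections are the three distinct vertices $a_i$, $c$, and $v_1$, i.e.\ a blue $\mathcal{C}^k_3$---again contradicting the hypothesis.

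The main obstacle is the bridge analysis in this last case: one must choose the index $i$ and the witness vertices inside $V'$ so that the three intersection vertices of the candidate blue triangle are genuinely distinct and all the needed bridge edges (in particular $\{c,v_1,w_1,\ldots,w_{k-2}\}$) turn out blue, and one must handle the asymmetry between the vertex $b$ (which does not lie in $e_2$) and the $a_\ell$'s. The slack $|V'|\ge 3(k-1)$ (equivalent to $t\ge 5$) is precisely what provides room to pick disjoint witnesses inside the red clique.
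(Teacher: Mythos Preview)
Your outline coincides with the paper's up to the point where $V'$ is entirely red; the real content is what happens after that, and here your sketch breaks down.

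First, the dichotomy you assert --- ``either two vertex-disjoint red bridges from distinct points of $e_1$ exist, or for some index $i$ every bridge $\{a_i\}\cup W'$ with $W'\subseteq V'$ is blue'' --- is simply false without further argument. For example (with $k=3$), it is perfectly consistent that each of $a_1,a_2,b$ has exactly one red bridge into $V'$ and that these three red bridges all pass through a common vertex of $V'$; then no two are disjoint, yet no $a_i$ has all bridges blue. So the alternative you want does not follow from the mere non-existence of disjoint red bridges.

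Second, even if you are granted ``some $a_i$ has all bridges blue'', your blue $\mathcal{C}^k_3$ needs the edge $\{c,v_1,w_1,\ldots,w_{k-2}\}$ to be blue, and nothing you have said forces this: $c\notin e_1$, so the red-$\mathcal{C}^k_t$ argument gives no information about bridges from $c$. You flag this yourself as ``the main obstacle'', but it is not a detail to be filled in --- it is the entire difficulty.

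The paper does not try to derive a contradiction in the all-red-$V'$ case at all; instead it produces the two disjoint red--blue pairs directly. Writing $e_1=\{v_1,\ldots,v_k\}$, $e_2=\{v_2,\ldots,v_{k+1}\}$, it first fixes disjoint $W_1,W_2\subseteq V'$ of size $k-1$ and branches on the colour of $\{v_1\}\cup W_1$. If it is blue, one immediately reads off two disjoint pairs using $e_1,e_2$ and edges inside $V'$. If it is red (and, by the same argument, $\{v_1\}\cup W'$ is red for every $W'$), then any red $\{v_j\}\cup W'$ with $2\le j\le k$ would complete a red $\mathcal{C}^k_t$ through $e_1$, so all such edges are blue; now $\{v_1\}\cup W_1$ (red) with $\{v_{k-1}\}\cup W_1$ (blue), together with $\{w\}\cup W_2$ (red, inside $V'$) with $\{v_k\}\cup W_2$ (blue), are the required disjoint pairs. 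The key move you are missing is to branch on a \emph{single} specific bridge rather than trying to establish a global dichotomy over all of them.
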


\begin{proof}{Note that both colors are used at least once, for otherwise there is either a red copy of $\mathcal{C}^k_t$ or a blue copy of $\mathcal{C}^k_3$, a contradiction.
As noted in Remark \ref{r1},
we can select $e_1=\{v_1,v_2,\ldots,v_{k}\}\in \mathcal{H}_{\rm
red}$ and $e_2=\{v_2,v_3,\ldots,v_{k+1}\}\in \mathcal{H}_{\rm
blue}.$  Let $W=V(\mathcal{H})\setminus
\{v_1,v_2,\ldots,v_{k+1}\}.$
 If $W$ is not monochromatic, we can find favorable edges $f_1$ and
 $f_2$ in $W$ based on Remark \ref{r1}. Thus all edges of $W$ are red (otherwise, there is a blue copy of $\mathcal{C}^k_3,$ a
 contradiction). Let $W_1$ and $W_2$ be disjoint subsets of $W$ so
 that $|W_1|=|W_2|=k-1.$ We
may assume that $g_1=\{v_{1}\}\cup W_1$ is red. If it is not,
let $g_2=\{v_{k+1}\}\cup W_2.$ If $g_2$ is red, then $g_2,e_2$ and
$W_1\cup \{w\},g_1,$ where $w\in W\setminus (W_1\cup W_2),$ are
favorable red-blue pairs. If $g_2$ is blue, then $e_1,g_1$ and
$W_2\cup\{w\},g_2,$ where $w\in W\setminus (W_1\cup W_2),$ are
desired. Indeed, for every $W'\subseteq W$ with $|W'|=k-1,$ the edge
$\{v_{1}\}\cup W'$ is red. Note that the edge
$f_1=\{v_k\}\cup W_2$ is blue. Otherwise, the edges $f_1, e_1, g_1$
with the $t-3$ suitable edges with vertices in $W$ give a red copy of
$\mathcal{C}^k_t.$ Actually, using a similar argument, for every
$i,$ $2\leq i\leq k,$ and every $W'\subseteq W$ with $|W'|=k-1,$
the edge $\{v_{k+2-i}\}\cup W'$ is blue. Therefore, $g_1, \{v_{k-1}\}\cup
W_1$ and $\{w\}\cup W_2,f_1,$ where $w\in W\setminus (W_1\cup
W_2),$ are two disjoint red-blue pairs that satisfy the mentioned
condition.
 }\end{proof}
\begin{corollary}\label{two disjoint intersecting edges}
Let $k\geq 3$, $t\geq 5$ and $\mathcal{H}=\mathcal{K}^k_{t(k-1)+1}$ be $2$-edge
colored red and blue. If there is no red copy of $\mathcal{C}^k_t$ and no blue copy of $\mathcal{C}^k_3$, then there exist two
disjoint intersecting pairs of red-blue edges $e_1,e_2$ and
$f_1,f_2$ so that $|e_1\cap e_2|=|f_1\cap f_2|=k-1.$
\end{corollary}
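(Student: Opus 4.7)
The plan is to bootstrap from Lemma \ref{two disjoint edges} (which already produces two disjoint red-blue intersecting pairs, but without control on the intersection size) together with Remark \ref{r1} (which produces a red-blue pair of intersection exactly $k-1$ as soon as both colors are present in a sufficiently large sub-hypergraph). The idea is to use the lemma only as a witness that each ``half'' of the vertex set carries both colors, and then appeal to the remark twice to upgrade the intersection sizes.

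More precisely, I would first invoke Lemma \ref{two disjoint edges} to obtain disjoint intersecting red-blue pairs $(a_1,a_2)$ and $(b_1,b_2)$ in $\mathcal{H}$, say with $a_1,b_1$ red and $a_2,b_2$ blue, and $(a_1\cup a_2)\cap(b_1\cup b_2)=\emptyset$. Next, consider the complete sub-hypergraph $\mathcal{H}_1$ induced on $V(\mathcal{H})\setminus(b_1\cup b_2)$. Its vertex set has size at least $t(k-1)+1-(2k-1)=(t-2)(k-1)\geq 3(k-1)\geq k+1$ (using $t\geq 5$ and $k\geq 3$), and it contains the red edge $a_1$ and the blue edge $a_2$, so both colors are used. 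Remark \ref{r1} then supplies a red edge $e_1$ and a blue edge $e_2$ in $\mathcal{H}_1$ with $|e_1\cap e_2|=k-1$, automatically disjoint from $b_1\cup b_2$.

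Now I would repeat the same trick on the other side: look at the sub-hypergraph $\mathcal{H}_2$ induced on $V(\mathcal{H})\setminus(e_1\cup e_2)$. Since $|e_1\cup e_2|=k+1$, the remaining vertex set has size $t(k-1)+1-(k+1)=(t-1)(k-1)-1\geq 4(k-1)-1\geq k+1$, and since $b_1\subseteq V(\mathcal{H})\setminus(e_1\cup e_2)$ is red while $b_2$ is blue (both survive because $e_1\cup e_2\subseteq V(\mathcal{H})\setminus(b_1\cup b_2)$), both colors occur in $\mathcal{H}_2$. A second application of Remark \ref{r1} then yields a red edge $f_1$ and a blue edge $f_2$ in $\mathcal{H}_2$ with $|f_1\cap f_2|=k-1$.

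The disjointness $(e_1\cup e_2)\cap(f_1\cup f_2)=\emptyset$ is built into the construction, so the pairs $(e_1,e_2)$ and $(f_1,f_2)$ are exactly what the corollary asks for. There is no genuine obstacle here; the only things to verify are the two cardinality inequalities that make Remark \ref{r1} applicable on each side, and these are immediate from $t\geq 5$ and $k\geq 3$. The conceptual content of the corollary therefore lies entirely in Lemma \ref{two disjoint edges}, which guarantees that each side still sees both colors after removing the other side.
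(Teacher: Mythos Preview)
Your proof is correct and follows essentially the same strategy as the paper: invoke Lemma~\ref{two disjoint edges} to get two disjoint red-blue intersecting pairs, then apply Remark~\ref{r1} twice to upgrade the intersection sizes to $k-1$. The paper's execution is slightly slicker---it applies Remark~\ref{r1} inside the small complete sub-hypergraphs on $e_1\cup e_2$ and on $f_1\cup f_2$ themselves (each of size at least $k+1$ since the edges intersect properly), so disjointness of the upgraded pairs is automatic and no cardinality estimate involving $t$ is needed at this step.
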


\begin{proof}{
Using Lemma \ref{two disjoint edges}, we can find two intersecting
pairs of red-blue edges $e_1,e_2$ and $f_1,f_2$ so that $(e_1\cup
e_2)\cap(f_1\cup f_2)=\emptyset.$ Now, consider the complete
hypergraphs on $V(e_1)\cup V(e_2),$ say $\mathcal{G}_1,$ and
$V(f_1)\cup V(f_2),$ say $\mathcal{G}_2$. Since $\mathcal{G}_1$
and $\mathcal{G}_2$ have edges of both colors, applying Remark \ref{r1} we can  find two edges of distinct colors intersecting in
$k-1$ vertices in each of $\mathcal{G}_1$ and $\mathcal{G}_2.$}\end{proof}

\begin{lemma}\label{R(C^k_3,Ck_5)}
Let $k\geq 4$ be an integer number. Then
$$R(\mathcal{C}^k_5,\mathcal{C}^k_3)= 5k-4.$$
\end{lemma}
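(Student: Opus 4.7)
The lower bound $R(\mathcal{C}^k_5,\mathcal{C}^k_3) \geq 5k-4$ is immediate from Lemma \ref{lower bound}. For the upper bound, I proceed by contradiction: assume $\mathcal{H}=\mathcal{K}^k_{5k-4}$ is 2-edge colored red and blue with no red copy of $\mathcal{C}^k_5$ and no blue copy of $\mathcal{C}^k_3$. My plan has three stages.

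First I will use the earlier results to pin down the coloring. Since $5k-4 = 5(k-1)+1$, Lemma \ref{C4k implies C5k} applies with $i=5$; hence a blue $\mathcal{C}^k_4$ in $\mathcal{H}$ would yield a red $\mathcal{C}^k_5$, contradicting our assumption, so $\mathcal{H}_{\rm blue}$ contains no $\mathcal{C}^k_4$ either. Moreover, Lemma \ref{R(C3,C4)} gives $R(\mathcal{C}^k_4,\mathcal{C}^k_3)=4k-3 \leq 5k-4$, and the absence of a blue $\mathcal{C}^k_3$ forces a red $\mathcal{C}^k_4$, say $\mathcal{C}=e_1e_2e_3e_4$. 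Set $W=V(\mathcal{H})\setminus V(\mathcal{C})$, so that $|W|=k$.

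Next, I would try to lift $\mathcal{C}$ to a red $\mathcal{C}^k_5$ via a local splitting move. For each edge $e_i=\{v_a,v_{a+1},\ldots,v_{a+k-1}\}$ of $\mathcal{C}$ (with boundary vertices $v_a,v_{a+k-1}$) and each ordered $(k-1)$-tuple $(w_1,\ldots,w_{k-1})$ of distinct vertices of $W$, the candidate edges
\begin{equation*}
e'_i = \{v_a, w_1, \ldots, w_{k-1}\}, \qquad e''_i = \{w_{k-1}, v_{a+1}, v_{a+2}, \ldots, v_{a+k-1}\}
\end{equation*}
satisfy $|e'_i\cap e''_i|=1$ and, together with $e_{i-1},e_{i+1},e_{i+2}$ (indices mod $4$), form a $\mathcal{C}^k_5$ on $5(k-1)$ vertices. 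If both $e'_i$ and $e''_i$ are red for some choice, we obtain a red $\mathcal{C}^k_5$, a contradiction. So we may assume that for every choice of $i$, of orientation, and of ordered $(k-1)$-tuple from $W$, at least one of $e'_i, e''_i$ is blue. This generates a rich family of blue edges, each of a very specific shape: $k-1$ vertices in $W$ plus one boundary vertex of $\mathcal{C}$, or one vertex in $W$ plus the $k-1$ non-boundary vertices of a cycle edge.

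The main obstacle is the third stage: producing a blue $\mathcal{C}^k_3$ from these forced blue edges. I would fix two consecutive cycle edges $e_i,e_{i+1}$ sharing the vertex $v_{a+k-1}$, and pigeonhole over the hinge vertex $w_{k-1}\in W$: two blue edges of the form $e''_i$ (for one hinge) and $e'_{i+1}$ (for a different hinge) intersect in exactly the single vertex $v_{a+k-1}$, which is precisely the pattern required by a loose triangle. A third blue edge, disjoint from $\mathcal{C}$ apart from meeting each of the first two in a single $W$-vertex, can then be forced either by iterating the splitting argument (since any red completion would again yield a red $\mathcal{C}^k_5$) or by applying Corollary \ref{two disjoint intersecting edges} inside a suitable sub-hypergraph to harvest an extra red-blue intersecting pair. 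The delicate point, which I expect to be the hardest, is arranging the hinges so that the three forced blue edges meet in the correct single-vertex pattern of a loose $\mathcal{C}^k_3$; the model is the explicit edge-by-edge construction carried out in Lemma \ref{C4k implies C5k}. Producing such a blue $\mathcal{C}^k_3$ contradicts our initial assumption and completes the proof.
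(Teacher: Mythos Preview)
Your first two stages are sound: ruling out a blue $\mathcal{C}^k_4$ via Lemma~\ref{C4k implies C5k}, locating a red $\mathcal{C}^k_4$ via Lemma~\ref{R(C3,C4)}, and observing that each splitting move forces at least one of $e'_i,e''_i$ to be blue are all correct.

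The gap in stage three is structural rather than merely delicate. The blue edges your splitting argument can force come in two shapes: Type~A edges $\{v\}\cup S$ with $v$ a boundary vertex of $\mathcal{C}$ and $S$ a $(k{-}1)$-subset of $W$, and Type~B edges $\{w\}\cup(e_i\setminus\{v\})$. Nothing prevents the adversary from satisfying \emph{every} constraint ``at least one of $e'_i,e''_i$ is blue'' using Type~A edges alone (simply colour every $\{v\}\cup S$ blue). But since $|W|=k$, any two Type~A edges meet in at least $k-2\geq 2$ vertices of $W$, so no loose triangle---indeed no pair of edges with a single-vertex overlap---can be assembled from them. Your pigeonhole on hinge vertices never gets started, because you cannot guarantee even one Type~B blue edge. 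Completing the argument would require a genuinely new source of blue edges, and that is precisely the hard content your outline defers.

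The paper avoids this trap by not anchoring on a red $\mathcal{C}^k_4$ at all. It invokes Corollary~\ref{two disjoint intersecting edges} to obtain two \emph{disjoint} red--blue pairs $e_1,e_2$ and $f_1,f_2$ with $|e_1\cap e_2|=|f_1\cap f_2|=k-1$; the remaining $3k-6$ vertices are partitioned into three $(k{-}2)$-sets $A,B,C$. A short cascade of claims (each saying one blue edge of a prescribed form forces another, by alternately threatening a blue $\mathcal{C}^k_3$ and a red $\mathcal{C}^k_5$) then shows that three specific bridging edges $g_1,g_2,g_3$ must be red---otherwise one iterates the claims to produce a blue $\mathcal{C}^k_3$ (for $k$ even) or a blue $\mathcal{C}^k_4$ (for $k$ odd), the latter handled by Lemma~\ref{C4k implies C5k}. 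The red $\mathcal{C}^k_5$ is then exhibited explicitly as $g_1f_1g_3e_1g_2$. Having two separated red--blue pairs and a pool of $3k-6$ outside vertices, rather than a single red $\mathcal{C}^k_4$ with only $|W|=k$ vertices left over, is exactly what makes the required single-vertex overlaps achievable.
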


\begin{proof}{
Suppose to the contrary that the edges of
$\mathcal{H}=\mathcal{K}^k_{5k-4}$ can be colored red and blue
with no   red copy of $\mathcal{C}^k_5$ and no  blue copy of
$\mathcal{C}^k_3$. Apply Corollary \ref{two disjoint intersecting
edges} for $t=5$ to find $$e_1=\{v_1,v_2,\ldots,v_{k}\}\in
\mathcal{H}_{\rm red}, e_2=\{v_2,v_3,\ldots,v_{k+1}\}\in
\mathcal{H}_{\rm blue}$$ and $$f_1=\{w_1,w_2,\ldots,w_{k}\}\in
\mathcal{H}_{\rm red}, f_2=\{w_2,w_3,\ldots,w_{k+1}\}\in
\mathcal{H}_{\rm blue}$$ so that  $$|e_1\cap e_2|=|f_1\cap
f_2|=k-1, (e_1\cup e_2)\cap(f_1\cup f_2)=\emptyset.$$ Set
$$W=V(\mathcal{H})\setminus \{v_1,\ldots, v_{k+1}, w_1, \ldots,
w_{k+1}\}$$ and partition the vertices of $W$ into three sets
$A,B$ and $C$ with $|A|=|B|=|C|=k-2$ (note that this is possible
since $|W|=3k-6$). Without loss of generality assume that $T$ and
$S$ are two subsets of $W$ so that $T\subseteq A,$ $S\subseteq B$
and $|T|+|S|=k-2.$ Set $T'=A\setminus T,$ $S'=B\setminus S,$ $t\in
T$ and $s\in S'$. The proofs of the following claims are similar,
so we only give a proof for Claim \ref{c1}.

\begin{claim}\label{c1}
If there
is $i\in \{2,3,\ldots,k\}$ so that the edge $e=\{v_{k+1},w_{k+2-i}\}\cup T
\cup S$ is blue, then for every $j\in\{2,3,\ldots,k\},$
 the edge
$f=\{w_{k+1},v_{k+2-j}\}\cup (T\setminus \{t\})\cup (S\cup \{s\}),$  is
also blue.
\end{claim}
\begin{proof}{ For every $\ell\in
\{2,3,\ldots,k\}\setminus \{i\}$ (resp. $\ell\in
\{2,3,\ldots,k\}\setminus \{j\}$), the edge $h_1=(T'\cup\{t\})\cup
S'\cup\{w_{k+2-\ell}\}$ (resp. $h_2=C\cup\{v_{k+2-\ell},w_{k+2-i}\}$) is red. Otherwise
$f_2h_1e$ (resp. $e_2eh_2$) is a blue copy of $\mathcal{C}^k_3$, a
contradiction to our assumptions. Now, if the edge  $f$ is red, then
$f_1h_1fe_1h_2$ is a red copy of
$\mathcal{C}^k_5$. This contradiction finishes the proof.}\end{proof}

\begin{claim}\label{c2}
If there is $i\in \{2,3,\ldots,k\}$  so that the edge $\{w_{k+1},v_{k+2-i}\}\cup T \cup
S$ is blue, then for every $j\in\{2,3,\ldots,k\},$ the edge
$\{v_{k+1},w_{k+2-j}\}\cup (T\setminus \{t\})\cup (S\cup \{s\})$ is also
blue.
\end{claim}

\begin{claim}\label{c3}
If there are $i,j\in \{2,3,\ldots,k\}$  so that $\{v_{k+2-i},w_{k+2-j}\}\cup T \cup
S$ is blue, then for every $\ell\in\{2,3,\ldots,k\}\setminus \{i\}$
and $\ell'\in\{2,3,\ldots,k\}\setminus \{j\}$, the edge
$\{v_{k+2-\ell},w_{k+2-\ell'}\}\cup (T\setminus \{t\})\cup (S\cup \{s\})$ is also
blue.
\end{claim}

One can easily check that  the edge $g_1=A\cup \{v_{k+1},w_{k}\}$ is
red. Otherwise, apply Claims \ref{c1} and \ref{c2} alternatively $k-3$ times to
find a blue copy of $\mathcal{C}^k_3$ if $k$ is even and apply Claims \ref{c1} and
\ref{c2} alternatively $k-2$ times to find a blue copy of $\mathcal{C}^k_4$ if $k$
is odd. In the first case, $g_1e_2(B\cup\{w_{k+1},v_{k},a\}\setminus
\{b\}),$ where $a\in A$ and $b\in B,$ is a blue copy of $\mathcal{C}^k_3$
that is a contradiction to our assumptions. In the second case,
$g_1f_2(B\cup\{w_{k+1},v_{k}\})e_2$ is a blue copy of $\mathcal{C}^k_4.$ Now,
using Lemma \ref{C4k implies C5k} we can find a red copy of
$\mathcal{C}^k_5,$ a contradiction. Also, a similar discussion can
be used to show that the edges $g_2=C\cup \{w_{k+1},v_{k-1},u\}\setminus
\{u'\}$, where $u\in A$ and $u'\in C$, and $g_3=B\cup \{v_{k},w_{k-1}\}$  are
red. Thereby, $g_1f_1g_3e_1g_2$ is a copy of $\mathcal{C}^k_5$ in
$\mathcal{H}_{\rm red}.$ This contradiction completes the proof.}\end{proof}

The proof of the following statement is similar to the proof of
Lemma \ref{R(C^k_3,Ck_5)}. So we only present the outline of the
proof.

\begin{lemma}\label{R(C3,C6)}
Let $k\geq 4$ be an integer number. Then
$$R(\mathcal{C}^k_6,\mathcal{C}^k_3)= 6k-5.$$
\end{lemma}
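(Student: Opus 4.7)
The lower bound $R(\mathcal{C}^k_6,\mathcal{C}^k_3)\geq 6k-5$ is immediate from Lemma \ref{lower bound}, so I focus on the matching upper bound by mimicking the strategy used in Lemma \ref{R(C^k_3,Ck_5)}. Suppose for contradiction that $\mathcal{H}=\mathcal{K}^k_{6k-5}$ is $2$-edge colored red and blue with no red copy of $\mathcal{C}^k_6$ and no blue copy of $\mathcal{C}^k_3$. Since $6k-5=6(k-1)+1$, Corollary \ref{two disjoint intersecting edges} with $t=6$ supplies two disjoint intersecting red--blue pairs
$$e_1=\{v_1,\ldots,v_k\}\in\mathcal{H}_{\rm red},\quad e_2=\{v_2,\ldots,v_{k+1}\}\in\mathcal{H}_{\rm blue},$$
$$f_1=\{w_1,\ldots,w_k\}\in\mathcal{H}_{\rm red},\quad f_2=\{w_2,\ldots,w_{k+1}\}\in\mathcal{H}_{\rm blue},$$
and the remaining vertex set $W=V(\mathcal{H})\setminus(e_1\cup e_2\cup f_1\cup f_2)$ has size $4k-7$.

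Since we now need to stitch together a $6$-cycle (rather than a $5$-cycle as in Lemma \ref{R(C^k_3,Ck_5)}), we will use $e_1$ and $f_1$ as two of the six edges and have to produce four more red ``linking'' edges. The plan is to partition $W$ into four parts $A,B,C,D$ of sizes $k-2,k-2,k-2,k-1$ (so $|A|+|B|+|C|+|D|=4k-7$), each part serving as the bulk of one linking edge, together with carefully chosen vertices from $\{v_1,v_{k-1},v_k,v_{k+1}\}$ and $\{w_1,w_{k-1},w_k,w_{k+1}\}$. A natural target candidate is a red copy
$$g_1\,f_1\,g_3\,e_1\,g_2\,g_4$$
of $\mathcal{C}^k_6$, where
$g_1=A\cup\{v_{k+1},w_k\}$, $g_3=B\cup\{v_k,w_{k+1}\}$,
$g_2=C\cup\{v_{k-1},w_{k-1}\}$ plus one swap with $D$, and $g_4$ is built on $D$ linking $g_1$ back to $g_2$; the exact formulas will be tuned to make consecutive members share one vertex and non-consecutive members be disjoint.

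The proof that each of $g_1,g_2,g_3,g_4$ must actually be red will proceed exactly as in Lemma \ref{R(C^k_3,Ck_5)}: for $T\subseteq A$, $S\subseteq B$ with $|T|+|S|=k-2$, $t\in T$, $s\in B\setminus S$ one establishes blue-swap Claims analogous to Claims \ref{c1}--\ref{c3} stating that if $\{v_{k+1},w_{k+2-i}\}\cup T\cup S$ is blue for some $i$, then $\{w_{k+1},v_{k+2-j}\}\cup (T\setminus\{t\})\cup(S\cup\{s\})$ is blue for every $j$, and likewise for the ``$w\to v$'' direction and for ``$v$-to-$w$'' blue edges. Iterating such a swap $k-3$ or $k-2$ times (according to the parity of $k$) forces either a blue $\mathcal{C}^k_3$ outright, or a blue $\mathcal{C}^k_4$; in the latter case Lemma \ref{C4k implies C5k} (which covers $i=6$ as well as $i=5$) promotes the blue $\mathcal{C}^k_4$ into a red $\mathcal{C}^k_6$, again a contradiction. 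Hence all four linking edges $g_1,g_2,g_3,g_4$ are red, and $g_1f_1g_3e_1g_2g_4$ is the sought red $\mathcal{C}^k_6$, giving the final contradiction.

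The main obstacle I foresee is not the combinatorial swap argument---that transfers verbatim from Lemma \ref{R(C^k_3,Ck_5)}---but the bookkeeping for the four linking edges: one must specify $g_1,g_2,g_3,g_4$ so that consecutive edges among $g_1,f_1,g_3,e_1,g_2,g_4$ meet in exactly one vertex, non-consecutive ones are disjoint, and the union of vertices used on the $W$-side exhausts $A\cup B\cup C\cup D$ without overlap (which is precisely why $|W|=4k-7$ is tight and why the partition must include one part of size $k-1$ to accommodate a small index shift when $g_2$ and $g_4$ share a vertex in $D$). Once this layout is fixed, the blue--red swap analysis from Lemma \ref{R(C^k_3,Ck_5)} closes the argument almost word for word.
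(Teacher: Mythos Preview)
Your overall strategy matches the paper's, and the setup through Corollary \ref{two disjoint intersecting edges} is correct. However, the assertion that the swap argument from Lemma \ref{R(C^k_3,Ck_5)} ``transfers verbatim'' hides a real gap.

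The difficulty is structural. Claims \ref{c1}--\ref{c3} all concern edges containing exactly one vertex from $\{v_2,\ldots,v_{k+1}\}$ and exactly one from $\{w_2,\ldots,w_{k+1}\}$, together with $k-2$ vertices from $W$. In the $5$-cycle this is enough, because each of the three linking edges joins the $v$-side to the $w$-side. But a $6$-cycle anchored at $e_1,f_1$ has four linking edges and hence two junctions between \emph{consecutive linking edges}; at such a junction the shared vertex must lie in $W$ (the relevant $v_i$'s and $w_j$'s are already used at the $e_1$- and $f_1$-junctions). Thus at least two of your $g_i$ are \emph{not} of the Claim \ref{c1}--\ref{c3} shape: they carry only one vertex from $\{v_i\}\cup\{w_j\}$ and their other ``end'' is a $W$-vertex. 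Your tentative $g_3=B\cup\{v_k,w_{k+1}\}$ already illustrates the problem (it is disjoint from $f_1$), and any $g_4$ that links $g_1$ back to $g_2$ through $D$ is simply not covered by those three claims, so there is nothing to iterate on.

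The paper resolves this by isolating a single vertex $u\in W$, partitioning $W\setminus\{u\}$ into four $(k-2)$-sets $A,B,C,D$, and setting $g_3=A\cup\{u,w_{k-1}\}$, $g_4=B\cup\{u,v_k\}$ (so $g_3\cap g_4=\{u\}$), with $g_1=C\cup\{v_{k+1},w_k\}$ and $g_2=(D\setminus\{u''\})\cup\{v_{k-1},w_{k+1},u'\}$ for $u'\in C$, $u''\in D$. Claims analogous to \ref{c1}--\ref{c2} still handle $g_1,g_2$, but for $g_3,g_4$ the paper proves a genuinely new claim: if $\{v_{k+2-i},u\}\cup T\cup S$ is blue for some $i$, then $\{w_{k+1},v_{k+2-j}\}\cup(T\setminus\{t\})\cup(S\cup\{s\})$ is blue for every $j\neq i$. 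The proof of this extra claim uses the already-established red edges $\{v_{k+1},w_\ell\}\cup C$ as part of a hypothetical red $\mathcal{C}^k_6$; only after this additional step can one iterate and force $g_3,g_4$ red. So the ``bookkeeping'' you flag is not the main obstacle --- a new swap claim tailored to the $u$-type edges is.
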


\begin{proof}{
Suppose that the edges of $\mathcal{H}=\mathcal{K}^k_{6k-5}$ are
colored red and blue with no  red copy of $\mathcal{C}^k_6$ and no
blue copy of $\mathcal{C}^k_3$. Using Corollary \ref{two disjoint
intersecting edges}, select edges $$e_1=\{v_1,v_2,\ldots,v_{k}\}\in
\mathcal{H}_{\rm red}, e_2=\{v_2,v_3,\ldots,v_{k+1}\}\in
\mathcal{H}_{\rm blue}$$ and $$f_1=\{w_1,w_2,\ldots,w_{k}\}\in
\mathcal{H}_{\rm red}, f_2=\{w_2,w_3,\ldots,w_{k+1}\}\in
\mathcal{H}_{\rm blue}.$$ Set $$W=V(\mathcal{H})\setminus
\{v_1,\ldots,v_{k+1},w_1, \ldots, w_{k+1}\}$$ and for a vertex $u\in W$ partition the
vertices of $W\setminus
\{u\}$ into four sets $A,B,C$ and $D$ of size
$k-2$. Suppose that $T$ and $S$ are two subsets of
$W$ so that $T\subseteq A,$ $S\subseteq B$ and $|T|+|S|=k-2.$ Set
$T'=A\setminus T,$ $S'=B\setminus S,$ $t\in T$ and $s\in S'$.
Arguments similar to the proofs of Claims \ref{c1} and \ref{c2} in Lemma \ref{R(C^k_3,Ck_5)}, yield the same claims here (note that, for proofs, more
details are required). Consequently, we may assume that the edges
$g_1=C\cup \{v_{k+1},w_{k}\}$ and $g_2=D \cup \{v_{k-1},w_{k+1},u'\}\setminus
\{u''\},$ where $u'\in C$ and $u''\in D,$ are red.
 Also one can easily see that the following claim
holds.

\begin{claim}
If there is $i\in \{2,3,\ldots,k\}$ so that the edge $f=\{v_{k+2-i},u\}\cup T\cup S$ is blue,
 then for every $j\in\{2,3,\ldots,k\}\setminus \{i\},$ the edge
$f'=\{w_{k+1},v_{k+2-j}\}\cup (T\setminus \{t\})\cup (S\cup \{s\})$  is
also blue.
\end{claim}

\begin{proof}{ Using the above arguments we may assume that for every $2\leq \ell\leq k$, the edge $h_1=\{v_{k+1},w_{k+2-\ell}\}\cup C$ is red. Also, for any $\ell'$ and $\ell''\neq \ell$ with $2\leq \ell', \ell''\leq k$ and $\ell'\neq i,j$, the edge $h_2=\{v_{k+2-\ell'},t,w_{k+2-\ell''}\}\cup (D\setminus\{d\})$ (resp. $h_3=\{v_{k+1},u\}\cup T'\cup S'$) is red, where $d\in D$. Otherwise $e_2fh_2$ (resp. $e_2fh_3$) is a blue copy of $\mathcal{C}^k_3$, a contradiction. Now if $f'$ is red, then $h_3f'e_1h_2f_1h_1$ is a red copy of $\mathcal{C}^k_6$, a contradiction.}\end{proof}

Similarly, we conclude that the edges $g_3=A \cup \{u,w_{k-1}\}$ and
$g_4=B\cup \{u,v_{k}\}$ are red.
 Therefore, $g_1
f_1 g_3 g_4 e_1 g_2$
 is a red copy of
$\mathcal{C}^k_6$. This contradiction finishes the proof.}\end{proof}

The following theorem is an immediate consequence of Theorems \ref{main theorem 3uniform} and  \ref{Am},  and  Lemmas \ref{R(Pk3,Pk3)}, \ref{R(C3,C4)}, \ref{R(C^k_3,Ck_5)} and
\ref{R(C3,C6)} (see Remark \ref{equi}).

\begin{theorem}\label{m3thm}
Conjecture {\rm\ref{cycle}} holds for $m=3$.
\end{theorem}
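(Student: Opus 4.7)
The statement to prove is $R(\mathcal{C}^k_n,\mathcal{C}^k_3)=(k-1)n+\lfloor\frac{3-1}{2}\rfloor=(k-1)n+1$ for every $n\geq 3$ and every $k\geq 3$. My plan is to split on $k$ and harvest the lemmas already established earlier in the paper; almost no further argument is needed.

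For $k=3$, Theorem \ref{main theorem 3uniform} already establishes Conjecture \ref{cycle} in full generality for $k=3$, so specializing $m=3$ gives the claim immediately. For $k\geq 4$, I would invoke Remark \ref{equi} (equivalently Theorem \ref{Am}) with $m=3$: it suffices to show $n\in A_3$ for every $n$ in the short initial window $[3,2m]=[3,6]$, since then Theorem \ref{Am} automatically propagates the equality to $A_3=[3,\infty]$.

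The window verification is a matter of matching each value of $n\in\{3,4,5,6\}$ to an earlier lemma and checking that the claimed Ramsey number equals $(k-1)n+1$. Specifically, for $n=3$ Lemma \ref{R(Pk3,Pk3)}(a) supplies $R(\mathcal{C}^k_3,\mathcal{C}^k_3)=3k-2=(k-1)\cdot 3+1$; for $n=4$ Lemma \ref{R(C3,C4)} supplies $R(\mathcal{C}^k_4,\mathcal{C}^k_3)=4k-3=(k-1)\cdot 4+1$; for $n=5$ Lemma \ref{R(C^k_3,Ck_5)} supplies $R(\mathcal{C}^k_5,\mathcal{C}^k_3)=5k-4=(k-1)\cdot 5+1$; and for $n=6$ Lemma \ref{R(C3,C6)} supplies $R(\mathcal{C}^k_6,\mathcal{C}^k_3)=6k-5=(k-1)\cdot 6+1$. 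Thus $[3,6]\subseteq A_3$, and Theorem \ref{Am} finishes the job.

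The main obstacle in this strategy is not in the final assembly but in lining up the required anchors: the genuine content lives in Lemmas \ref{R(C^k_3,Ck_5)} and \ref{R(C3,C6)}, whose proofs build delicate red/blue structures on the two disjoint red-blue intersecting pairs produced by Corollary \ref{two disjoint intersecting edges}, together with the doubling mechanism encoded in Lemma \ref{CnCmimplies Cn+m} that drives Theorem \ref{Am}. Once those lemmas are granted, Theorem \ref{m3thm} itself collapses to the four case verifications above plus one application of Theorem \ref{Am}, so writing the proof is essentially bookkeeping.
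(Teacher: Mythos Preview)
Your proposal is correct and matches the paper's own proof essentially verbatim: the paper states the theorem as an immediate consequence of Theorem~\ref{main theorem 3uniform} (handling $k=3$), Theorem~\ref{Am} via Remark~\ref{equi} (reducing $k\geq 4$ to the window $[3,6]$), and Lemmas~\ref{R(Pk3,Pk3)}, \ref{R(C3,C4)}, \ref{R(C^k_3,Ck_5)}, \ref{R(C3,C6)} (covering $n=3,4,5,6$ respectively). Your observation that the real work lies in those anchor lemmas rather than in the final assembly is exactly how the paper treats it as well.
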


Using Theorems \ref{equivalent1} and \ref{m3thm} we have:

\begin{theorem}
Conjecture {\rm\ref{path-cycle}} is true for $m=3$.
\end{theorem}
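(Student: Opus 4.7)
My plan is to derive this directly as a corollary of Theorems \ref{equivalent1} and \ref{m3thm}. The first step will be to invoke Theorem \ref{m3thm}, which establishes Conjecture \ref{cycle} at $m = 3$, giving
\[
R(\mathcal{C}^k_n, \mathcal{C}^k_3) = (k-1)n + 1
\]
for every $k \geq 3$ and every $n \geq 3$; this supplies the cycle-cycle equality demanded by Conjecture \ref{path-cycle} at $m = 3$, together with the matching lower bound from Lemma \ref{lower bound}.

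The second step will be to invoke Theorem \ref{equivalent1}, whose pointwise engine is Theorem \ref{connection}. Feeding the output of Theorem \ref{m3thm} into Theorem \ref{connection} at $(n, 3)$, the hypothesis $R(\mathcal{C}^k_n, \mathcal{C}^k_3) = (k-1)n + 1$ immediately yields $R(\mathcal{P}^k_n, \mathcal{C}^k_3) = (k-1)n + 2$, and the \emph{moreover} clause at $n = m = 3$ additionally yields $R(\mathcal{P}^k_3, \mathcal{P}^k_3) = 3k - 1$. For the off-diagonal path-path value $R(\mathcal{P}^k_n, \mathcal{P}^k_3)$ with $n \geq 4$, the plan is to chain once more through Theorem \ref{connection}, this time at $(n, 4)$, whose second conclusion directly reads off the desired number $(k-1)n + 2$. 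Combining with the matching lower bounds of Lemma \ref{lower bound}, I would then have all three equalities of Conjecture \ref{path-cycle} at $m = 3$ for every $n \geq 3$ and $k \geq 3$.

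At this stage there is essentially no obstacle remaining: all the heavy lifting was done earlier in the paper, namely proving the small-case cycle-cycle Ramsey numbers (Lemmas \ref{R(C3,C4)}, \ref{R(C^k_3,Ck_5)}, \ref{R(C3,C6)}), extending them to all $n \geq m$ via Theorem \ref{Am} and Lemma \ref{CnCmimplies Cn+m}, and assembling the conversion apparatus of Theorems \ref{connection} and \ref{equivalent1}. The only genuine work left is the routine bookkeeping of matching the three equalities of Conjecture \ref{path-cycle} against the pointwise outputs of Theorem \ref{connection}; the main obstacle, in hindsight, was the small-case Ramsey-number verifications of Section 4, since those are what drive Theorem \ref{m3thm} through Theorem \ref{Am} and make the final invocation of Theorem \ref{equivalent1} possible.
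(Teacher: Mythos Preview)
Your overall strategy matches the paper's one-line proof exactly: invoke Theorem~\ref{m3thm} to get Conjecture~\ref{cycle} at $m=3$, then pass through the equivalence machinery. The cycle--cycle and path--cycle parts (your Steps 1 and 2) are fine.

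The gap is in your Step 3. To obtain $R(\mathcal{P}^k_n,\mathcal{P}^k_3)$ for $n\geq 4$ you invoke Theorem~\ref{connection} at the pair $(n,4)$. But the hypothesis of Theorem~\ref{connection} at $(n,4)$ is
\[
R(\mathcal{C}^k_n,\mathcal{C}^k_4)=(k-1)n+\Big\lfloor\frac{4-1}{2}\Big\rfloor=(k-1)n+1,
\]
i.e.\ Conjecture~\ref{cycle} at $m=4$. Theorem~\ref{m3thm} only establishes Conjecture~\ref{cycle} at $m=3$; the case $m=4$ is \emph{not} proved anywhere in the paper (only the diagonal value $R(\mathcal{C}^k_4,\mathcal{C}^k_4)$ appears, in Lemma~\ref{R(Pk3,Pk3)}(b)). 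So as written, your chain breaks here.

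The fix does not require $m=4$ at all. You already have $R(\mathcal{P}^k_n,\mathcal{C}^k_3)=(k-1)n+2$ from Step~2. Given any $2$-coloring of $\mathcal{K}^k_{(k-1)n+2}$, either there is a red $\mathcal{P}^k_n$ (done) or a blue $\mathcal{C}^k_3$. In the latter case there is at least one vertex outside the blue $\mathcal{C}^k_3$ (since $(k-1)n+2>3(k-1)$ for $n\geq 3$), and \cite[Lemma~2]{GR}---the same lemma used throughout the proof of Theorem~\ref{connection}---turns the blue $\mathcal{C}^k_3$ into a blue $\mathcal{P}^k_3$. Together with the lower bound from Lemma~\ref{lower bound}, this gives $R(\mathcal{P}^k_n,\mathcal{P}^k_3)=(k-1)n+2$ for all $n\geq 3$, staying entirely within the $m=3$ information.
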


\footnotesize

\end{document}